\mathchardef\mhyphen="2D
\theoremstyle{definition}
\newtheorem{thm}{Theorem}
\newtheorem{cor}[thm]{Corollary}
\newtheorem{lem}[thm]{Lemma}
\newtheorem{defn}{Definition}%
\newtheorem{exa}{Example}
\newcommand{\X}{{\bf X}}
\newcommand{\E}{\mathbb{E}}
\newcommand{\Ran}{\mathrm{Ran}}
\newcommand{\Proj}{\mathbf{P}}
\begin{document}

\begin{frontmatter}

\title{Pass-efficient Randomized Algorithms for Low-rank
Approximation of Quaternion Matrices}

\author[inst1]{Salman Ahmadi-Asl$^{*}$}

\affiliation[inst1]{organization={Research Center of the Artificial Intelligence Institute, and Lab of Machine Learning
and Knowledge Representation, Innopolis University, 420500,},
            city={Innopolis}, 
            country={Russia, {\em s.ahmadiasl@innopolis.ru}}}

\author[label1]{Malihe Nobakht Kooshkghazi$^{\dagger}$}
\affiliation[label1]{organization={Independent researcher, m.nobakht88@gmail.com},
            }

\author[inst2]{Valentin Leplat$^{\dagger}$}

\affiliation[inst]{organization={ Innopolis University, 420500, Russia, v.leplat@innopolis.ru\\ $^*$Corresponding author,\,\,$^{\dagger}$Equal contribution},
           }



    

\begin{abstract}
Randomized algorithms for low-rank approximation of quaternion matrices have gained increasing attention in recent years. However, existing methods overlook pass efficiency—the ability to limit the number of passes over the input matrix—which is critical in modern computing environments dominated by communication costs. We address this gap by proposing a suite of pass-efficient randomized algorithms that let users directly trade pass budget for approximation accuracy. Our contributions include: (i) a family of arbitrary-pass randomized algorithms for low-rank approximation of quaternion matrices that operate under a user-specified number of matrix views, and (ii) a pass-efficient extension of block Krylov subspace methods that accelerates convergence for matrices with slowly decaying spectra. Furthermore, we establish spectral norm error bounds showing that the expected approximation error decays exponentially with the number of passes. Finally, we validate our framework through extensive numerical experiments and demonstrate its practical relevance across multiple applications, including quaternionic data compression, matrix completion, image super-resolution, and deep learning.
\end{abstract}


\begin{keyword}
Quaternion \sep randomized algorithms \sep pass-efficient randomized algorithms
\end{keyword}

\end{frontmatter}


\section{Introduction}
\label{sec:intro}
Quaternions are a four-dimensional extension of the concept of complex numbers. They were first introduced by the Irish mathematician William Rowan Hamilton in 1843 \cite{hamilton1848xi,rosenfeld2012history}. Unlike complex numbers, which consist of a real part and an imaginary part, quaternions have three imaginary components in addition to the real part.  They are important tools in several applications. For example, quaternions are used to represent the orientation of objects in 3D space, making them valuable for applications in robotics and control systems. Their ability to avoid singularities and compactly represent rotations makes them useful for describing the orientation of robotic bodies and devices \cite{vince2011quaternions}. In signal processing \cite{ell2014quaternion} and control systems, quaternions are utilized to express and manipulate 3D rotations and orientations. They offer advantages in terms of stability and computational efficiency. 
Other applications include quantum physics and quantum mechanics, 
where quaternions can be used to represent quantum states and perform 
operations on quantum systems \cite{finkelstein1962foundations}.

In the past few years, quaternion matrices \cite{zhang1997quaternions} have attracted increasing attention, 
and many properties of real and complex matrices, such as eigenvalues, the SVD \cite{le2004singular,le2007jacobi}, 
and the QR decomposition \cite{bunse1989quaternion}, have been studied in the quaternion setting. 
We briefly motivate such extensions.

A color image has three Red-Green-Blue (RGB) channels, so it can be represented as a pure quaternion-valued matrix. 
This representation allows us to treat the image as a single object instead of processing each channel separately.
For example, the quaternion SVD (QSVD) can be directly applied to the pure quaternion, and a low-rank approximation of it can be computed to compress the image.  

Recently, some of these developments have also been generalized to split-quaternions \cite{ablamowicz2020moore} 
and to Clifford algebras \cite{cao2022moore}.
Many iterative algorithms have been proposed to solve linear matrix equations over the quaternion field \cite{ahmadi2017iterative,ahmadi2017efficient}. 
Using efficient BLAS-3 matrix operations, structure-preserving real representations of quaternion matrices 
have proved to be a very effective approach for quaternion computations \cite{jia2019lanczos}.
{These efficient algorithms have been further improved by incorporating the randomization framework \cite{liu2022randomized,ren2022randomized}.} Randomized matrix approximation is a technique used to approximate large matrices by constructing a low-rank approximation using random sampling. This approach is particularly useful for handling large-scale data, where traditional deterministic methods are computationally expensive. It is widely used in various applications, including data compression and dimensionality reduction, solving large-scale linear systems and eigenvalue problems, and machine learning and data analysis, such as in principal component analysis (PCA) and recommendation systems. {The works \cite{liu2022randomized,ren2022randomized} propose efficient randomized} algorithms based on random projection for the QSVD and QLP decompositions. However, they do not focus on pass efficiency, which is a main bottleneck on modern computer architectures. Indeed, pass-efficient algorithms refer to those that minimize the number of passes over the data. This is particularly important for large datasets where reading the entire dataset multiple times can be computationally expensive. 

Motivated by this issue, we propose pass-efficient randomized algorithms for low-rank approximation of quaternion matrices.

Our main contributions can be summarized as follows:
\begin{itemize}
    \item We propose a suite of pass-efficient randomized algorithms for low-rank approximation of quaternion matrices, enabling {explicit} control over the number of data passes while maintaining high approximation quality.

    \item We introduce a pass-efficient extension to block Krylov subspace methods in the quaternion setting, which improves convergence for matrices with slowly decaying singular values.

    \item We provide a detailed theoretical analysis of the proposed algorithms, including spectral norm bounds and structural insights into the quaternionic case.

    \item We demonstrate the practical relevance of our methods through extensive experiments on quaternionic data, with applications in color image compression, matrix completion, super-resolution, and deep learning.
\end{itemize}

The paper is structured as follows: In Section \ref{sec:sample:appendix}, necessary concepts and mathematical formulas are presented. In Section \ref{sec:propo}, we present our proposed pass-efficient randomized algorithm with a detailed discussion on its theoretical and numerical properties. The theoretical results are given in Section \ref{sec:the}. We introduce a pass-efficient extension to block Krylov subspace in Section~\ref{sec:BKrylov}. The computational complexity of the algorithms are presented in Section \ref{Sec:compcost}.
Two applications of the proposed randomized pass-efficient algorithm in image completion and image super-resolution are {presented} in Section \ref{Sec:appl}. Section \ref{sec:experi} is devoted to the experimental results, and we give a conclusion in Section \ref{sec:conclu}.

In the next section, we present the necessary mathematical preliminaries.

\section{Preliminaries}
\label{sec:sample:appendix}
We denote matrices by capital bold letters, e.g. $\X$. 
We denote the spectral norm of a matrix by $\|\cdot\|_2$, 
the Frobenius norm by $\|\cdot\|_F$, 
and the Euclidean norm of a vector by $\|\cdot\|$. 
The expectation operator is denoted by $\mathbb{E}[\cdot]$. For a matrix $\bf X$, $\operatorname{Ran}(\bf X)$ and $\bf X^\dagger$ denote its range (column space) and Moore-Penrose inverse, respectively. The notation ${ \bf W } \stackrel{d}{=}{\bf \Omega}$, means that the random matrix (or random vector) $\mathbf{W}$ is equal in distribution to the random matrix $\mathbf{\Omega}$

A quaternion is typically represented as:
\[
\mathbf{q} = q_0 + q_1 \mathbf{i} + q_2 \mathbf{j} + q_3 \mathbf{k}
\]
where \( q_0, q_1, q_2, q_3 \) are real numbers, and \( \mathbf{i}, \mathbf{j}, \mathbf{k} \) are the fundamental quaternion units satisfying the following multiplication rules:
\[
\mathbf{i}^2 = \mathbf{j}^2 = \mathbf{k}^2 = \mathbf{i}\mathbf{j}\mathbf{k} = -1
\]
The sum of two quaternions \( \mathbf{q} = q_0 + q_1 \mathbf{i} + q_2 \mathbf{j} + q_3 \mathbf{k} \) and \( \mathbf{p} = p_0 + p_1 \mathbf{i} + p_2 \mathbf{j} + p_3 \mathbf{k} \) is given by:
    \[
    \mathbf{q} + \mathbf{p} = (q_0 + p_0) + (q_1 + p_1) \mathbf{i} + (q_2 + p_2) \mathbf{j} + (q_3 + p_3) \mathbf{k}
    \]
The product of two quaternions \( \mathbf{q} \) and \( \mathbf{p} \) is given by:
\begin{eqnarray}
    \nonumber
    \mathbf{q} \mathbf{p} = (q_0 p_0 - q_1 p_1 - q_2 p_2 - q_3 p_3) + (q_0 p_1 + q_1 p_0 + q_2 p_3 - q_3 p_2)\\
     \mathbf{i} + (q_0 p_2 - q_1 p_3 + q_2 p_0 + q_3 p_1) \mathbf{j} + (q_0 p_3 + q_1 p_2 - q_2 p_1 + q_3 p_0) \mathbf{k}
\end{eqnarray}
The conjugate of a quaternion \( \mathbf{q} \) is defined as: $
    \mathbf{q}^* = q_0 - q_1 \mathbf{i} - q_2 \mathbf{j} - q_3 \mathbf{k}$.
The norm (or magnitude) of a quaternion \( \mathbf{q} \) is given by:$
    \|\mathbf{q}\| = \sqrt{q_0^2 + q_1^2 + q_2^2 + q_3^2},$
and the inverse of a non-zero quaternion \( \mathbf{q} \) is given by: $
    \mathbf{q}^{-1} = \frac{\mathbf{q}^*}{\|\mathbf{q}\|^2}.$
    
A quaternion \( \mathbf{q} \) is called a unit quaternion if its norm is 1, i.e., \( \|\mathbf{q}\| = 1 \). 
It is interesting to note that a unit quaternion \( \mathbf{q} = q_0 + q_1 \mathbf{i} + q_2 \mathbf{j} + q_3 \mathbf{k} \) can represent a rotation in 3D space\footnote{{The unit quaternion $q$ representing a rotation of angle $\theta$ around the unit axis 
${\bf u}$ is defined as 
$q = \cos\!\big(\tfrac{\theta}{2}\big) + {\bf u}\,\sin\!\big(\tfrac{\theta}{2}\big)$, 
where ${\bf u}$ is the normalized rotation axis.}}. Indeed, given a vector \( \mathbf{v} = v_1 \mathbf{i} + v_2 \mathbf{j} + v_3 \mathbf{k} \), the rotated vector \( \mathbf{v}' \) is given by:
    \[
    \mathbf{v}' = \mathbf{q} \mathbf{v} \mathbf{q}^*,
    \]
where \( \mathbf{v} \) is treated as a pure quaternion (i.e., with \( q_0 = 0 \)).

A quaternion matrix is a matrix whose elements are quaternions. The space of quaternion matrices of size $m\times n$ is denoted by $\mathbb{Q}^{m\times n}$. The conjugate transpose of a quaternion
matrix ${\bf X}={\bf X}_1+{\bf X}_2{\bf i}+{\bf X}_3{\bf j}+{\bf X}_4{\bf k}$ is defined as 
${\bf X}^H={\bf X}^T_1-{\bf X}^T_2{\bf i}-{\bf X}^T_3{\bf j}-{\bf X}^T_4{\bf k}$,
and its Frobenius norm is defined as
$
||{\bf X}||_F=\sqrt{\sum_{i,j}||x_{i,j}||^2}.
$ A quaternion matrix ${\bf V} \in \mathbb{Q}^{n \times n}$ is said to be unitary if ${\bf V}^H {\bf V} = {\bf I}$, where ${{\bf V}^H}$ denotes the {conjugate transpose} of {$\bf V$}.

\begin{defn}
(Quaternion Singular Value Decomposition (QSVD) \cite{zhang1997quaternions} and Quaternion QR decomposition \cite{bunse1989quaternion})  
Let ${\bf X}\in\mathbb{Q}^{m\times n}$ be a quaternion matrix; then it can be decomposed in the following forms:
\begin{eqnarray}\label{qsvd}
{\bf X}&=&{\bf U}\,{\bf \Sigma}\,{\bf V}^H,\\
\label{QQR}
{\bf X}&=&{\bf Q}\,{\bf R}.
\end{eqnarray}
The decomposition in \eqref{qsvd}, is called quaternion SVD (QSVD), where ${\bf U}\in\mathbb{Q}^{m\times m}$ and ${\bf V}\in\mathbb{Q}^{n\times n}$ are unitary quaternion matrices, while ${\bf \Sigma}\in\mathbb{R}^{m\times n}$ is a {real} diagonal matrix with the following structure: 
\[
{\bf \Sigma}=\begin{bmatrix}
{\bf \Sigma}_r & {\bf 0}\\
{\bf 0} & {\bf 0}
\end{bmatrix}\in\mathbb{R}^{m\times n}
\]
in which ${\bf \Sigma}_r={\rm diag}(\sigma_1,\sigma_2,\ldots,\sigma_r)\in\mathbb{R}^{r\times r}$, $r$ is the matrix rank of ${\bf X}$ and $\sigma_1\geq \sigma_2\geq\cdots\geq \sigma_r>0$. 
Also, the decomposition \eqref{QQR} is called quaternion QR decomposition, where ${\bf Q}\in\mathbb{Q}^{m\times n}$ {has orthonormal columns}
while ${\bf R}\in\mathbb{Q}^{n\times n}$ is an upper triangular matrix.
\end{defn}
 
\begin{defn}
({\bf Gaussian random matrix} \cite{liu2022randomized}) 
A Gaussian quaternion matrix is defined as 
\[
 {\bf \Omega}={\bf \Omega}_1+{\bf \Omega}_2{\bf i}+{\bf \Omega}_3{\bf j}+{\bf \Omega}_4{\bf k},   
\]
where ${\bf \Omega}_\ell\in\mathbb{R}^{m\times n}$, $\ell=1,\dots,4$, are independent real Gaussian matrices whose entries are i.i.d.\ ${\mathcal N}(0,1)$.
\end{defn}

With these foundations established, we introduce our pass-efficient algorithms.

\section{Proposed pass-efficient algorithms for quaternion matrices}\label{sec:propo}
This section is devoted to presenting our pass-efficient randomized algorithms for low-rank approximation of quaternion matrices. Randomization has been proved to be an efficient framework for performing a variety of linear and multilinear operations and decompositions. In particular, such algorithms are of interest due to their communication efficiency, robustness, lower computational complexity, probabilistic guarantees, and flexibility. In the era of big data, access to the full data matrix is often limited to a small number of passes—sometimes even a single pass—due to high communication costs. This challenges the applicability of traditional iterative algorithms, such as classical Krylov subspace methods, which typically require multiple passes. Motivated by this, we first develop pass-efficient techniques for low-rank approximation that work under strict pass constraints. In Section 5, we then show how these techniques can be extended to design pass-efficient variants of block Krylov subspace methods that retain low-pass complexity while accelerating convergence.

To the best of our knowledge, there are only a few papers on randomized algorithms for quaternion matrices \cite{liu2022randomized,ren2022randomized,liu2024fixed,chang2024one,yang2024randomized}, none of them discusses the pass efficiency of algorithms using an arbitrary number of passes. Besides, we exploit the proposed pass-efficient algorithms for the task of color image inpainting, compression, super-resolution, and deep learning as a new application and contribution.

{We first briefly review randomized algorithms for low-rank matrix approximation.}
Given a matrix ${\bf X} \in \mathbb{Q}^{m \times n}$, the goal is to find a low-rank approximation $\tilde{\bf X}$  such that:
\[
{\bf X} \approx \tilde{\bf X} = {\bf Q} {\bf Z},
\]
where \( {\bf Q} \in \mathbb{Q}^{m \times k} \) is an orthonormal matrix and \( {\bf Z} \in \mathbb{Q}^{k \times n} \) is a smaller matrix, with \( k \ll \min(m, n) \).
The randomized algorithm for low-rank matrix approximation typically involves the following steps:

\begin{enumerate}
    \item \textbf{Sampling}: Generate a random matrix \( {\bf \Omega} \in \mathbb{Q}^{n \times k} \) with entries drawn from a suitable distribution (e.g., Gaussian).
    \item \textbf{Projection}: Compute the matrix \( {\bf K} = {\bf X \Omega} \).
    \item \textbf{Orthonormalization}: Compute an orthonormal basis \( {\bf Q} \) for the range of \( {\bf K} \) using QR decomposition:
    \[
    {\bf K} = {\bf Q R}.
    \]
    \item \textbf{Approximation}: Form the low-rank approximation
    \[
    \tilde{\bf X} = {\bf Q Q}^H {\bf X}.
    \]
\end{enumerate}

The approximation described above can be improved by using the oversampling concept, which means the sampling matrix ${\bf \Omega} \in \mathbb{Q}^{n \times (k+p)}$ instead of a sampling matrix of size $n\times k$. This helps to better capture the range of the matrix ${\bf X}$. However, this approach is applicable only when the singular values of the matrix ${\bf X}$ decrease quite rapidly. To handle the scenario of matrices with low decaying singular values, the power iteration scheme is used.  

{Power iteration can be used to refine these approximations by improving the alignment with the dominant singular vectors. 
In the power scheme one forms
\[
{\bf K} = ({\bf X}{\bf X }^H)^q{\bf X} {\bf \Omega},
\]
where $q$ is the number of power iterations.} Increasing $q$ improves the accuracy of the approximation, while each power iteration requires additional matrix multiplications, increasing the computational cost. The randomized algorithm equipped with the oversampling and power iteration scheme is presented in Algorithm \ref{ALg_1}. An essential property of this algorithm is that for a given power iteration parameter $q$, the algorithm requires viewing or passing the data matrix ${\bf X},$ $2q+2$ times. More precisely, $2q$ times in lines 4-5 inside the ``for'' loop and two additional passes in lines 2 and 7. So, the question is whether it is possible to have a randomized algorithm that can provide a low-rank approximation for any budget of views, not necessarily even numbers. 

{This question was answered positively in \cite{bjarkason2019pass}, where a flexible algorithm was proposed for any number of matrix views. 
The same idea was later extended to tensors in \cite{ahmadi2024randomized}.}
Here, we use this technique for quaternion matrices. The algorithm is summarized in Algorithm \ref{ALg_2}. To be more precise, Algorithm \ref{ALg_2}, for a given budget $v\geq 2$ of views, computes an approximate truncated SVD $[{\bf U}_R,{\bf \Sigma}_R,{\bf V}_R]$ using one of the following strategies: 

\begin{itemize}
\item (If $v$ is even) Compute an orthonormal matrix ${\bf Q}^{(2)}$ whose columns build a basis for the
column space of $({\bf X}{\bf X}^H)^{(v-2)/2}{\bf X}{\bf \Omega}$. Then compute the rank-R TSVD $[{\bf V}_R,{\bf \Sigma}_R,\widetilde{{\bf U}_R}]$ of ${\bf X}^H{\bf Q}^{(2)}$ and set ${\bf U}_R={\bf Q}^{(2)}\widetilde{{\bf U}_R}$.

\item (If $v$ is odd) Compute an orthonormal matrix {${\bf Q}^{(1)}$} whose columns form a basis for the
column space of $({\bf X}^H{\bf X})^{(v-1)/2}{\bf \Omega}$. Then compute the rank-R TSVD $[{\bf U}_R,{\bf \Sigma}_R,\widetilde{{\bf V}_R}]$ of ${\bf X}{\bf Q}^{(1)}$  and set ${\bf V}_R={\bf Q}^{(1)}\widetilde{{\bf V}_R}$.
\end{itemize}

It can be easily seen that for $v=2(q+1)$, the first option is reduced to Algorithm \ref{ALg_1}. However, when we have a budget of an odd number of views, the second stage can be used. The second stage is a modification of the subspace algorithms proposed in \cite{vogel1994iterative,rokhlin2010randomized} as discussed in \cite{bjarkason2019pass}. Algorithm \ref{ALg_2} is more flexible in terms of the number of views/passes to get a low-rank matrix approximation and can be used for any number of passes greater than 2.

\RestyleAlgo{ruled}
\begin{algorithm}
{\smaller
\LinesNumbered
\SetKwInOut{Input}{Input}
\SetKwInOut{Output}{Output}  \Input{A data matrix ${\bf X}\in\mathbb{Q}^{I_1\times I_2}$; a target rank $k$; Oversampling $p$ and the power iteration $q$.}  \Output{Truncated SVD: ${\bf X}\cong \hat{{\bf X}}^{(q)}={\bf U}{\bf S}{\bf V}^H$}
\caption{Classical randomized subspace method for computation of the truncated SVD}\label{ALg_1}
${\bf \Omega}$ is a quaternion Gaussian random matrix of size $I_2\times (p+k)$;\\

$[{\bf Q}^{(1)},\sim] = {{\rm QR}}({\bf X}{\bf \Omega})$;\\
\For{$i=1,2,\ldots,q$}{
$[{\bf Q}^{(2)},\sim] = {{\rm QR}}({\bf X}^H{\bf Q}^{(1)})$;\\
$[{\bf Q}^{(1)},\sim] = {{\rm QR}}({\bf X}{\bf Q}^{(2)})$;\\
}
$[{\bf Q}^{(2)},{\bf R}] = {{\rm QR}}({\bf X}^H{\bf Q}^{(1)})$;\\
$[\widehat{\bf V},{\bf S},\widehat{\bf U}] = {\rm Truncated }$ $\,{\rm SVD}({\bf R},k)$;\\
${\bf V}={\bf Q}^{(2)}\widehat{\bf V}$;\\
${\bf U}={\bf Q}^{(1)}\widehat{\bf U}$;
}
\end{algorithm}

\RestyleAlgo{ruled}
\begin{algorithm}
{\smaller
\LinesNumbered
\SetKwInOut{Input}{Input}
\SetKwInOut{Output}{Output}  \Input{A data matrix ${\bf X}\in\mathbb{Q}^{I_1\times I_2}$; a target rank $k$; Oversampling $p$ and the budget of number of passes $v$.} \Output{Truncated SVD: ${\bf X}\cong {\bf U}{\bf S}{\bf V}^H$}
\caption{Randomized truncated SVD with an arbitrary number of passes}\label{ALg_2}
${\bf Q}^{(2)}$ is a quaternion Gaussian random matrix of size $I_2\times (p+k)$;\\
\For{$i=1,2,\ldots,v$}{
\eIf{$i$ is odd}
{$[{\bf Q}^{(1)},{\bf R}^{(1)}] = {{\rm QR}}({\bf X}{\bf Q}^{(2)})$;}
{$[{\bf Q}^{(2)},{\bf R}^{(2)}] = {{\rm QR}}({\bf X}^H{\bf Q}^{(1)})$;}
}
\eIf{$v$ is even}
{$[\widehat{\bf V},{\bf S},\widehat{\bf U}]=$Truncated SVD$({\bf R}^{(2)},k)$;}
{$[\widehat{\bf U},{\bf S},\widehat{\bf V}]=$Truncated SVD$({\bf R}^{(1)},k)$;}
${\bf V}={\bf Q}^{(2)}\widehat{\bf V}$;\\
${\bf U}={\bf Q}^{(1)}\widehat{\bf U}$;
}
\end{algorithm}

The next section presents the theoretical results of the proposed algorithm. 

\section{Theoretical results}\label{sec:the}

{This section is devoted to computing upper bounds for the expected approximation error
\[
\|\hat{{\bf X}}^{(q)}_{k+p}-{\bf X}\|_{2}
= \| ({\bf I}-{\bf Q}^{(1)}{\bf Q}^{(1)H}) {\bf X}\|_{2},\qquad q\geq 0,
\]
of the approximations computed by the proposed algorithms.}
The following lemma presents an upper bound for the expected approximation error (in spectral norm) for a rank-$(k + p)$ matrix of Algorithm \ref{ALg_1}. 
\begin{lem}\label{mn1} \cite{liu2022randomized}
(Deviation bound for approximation errors of Algorithm~\ref{ALg_1}.) Let the QSVD of the $I_{1}\times I_{2},\,(I_{1}\geq I_{2})$ quaternion matrix ${\bf X}$ be 
\begin{eqnarray*}
{\bf X}={\bf U}{\bf \Sigma}{\bf V}^H={\bf U\begin{bmatrix}
 {\bf \Sigma}_1&{\bf 0}\\
 {\bf 0} & {\bf \Sigma}_2
\end{bmatrix}}\begin{bmatrix}
   {\bf V}^H_1\\
   {\bf V}^H_2
 \end{bmatrix}, \quad {\bf \Sigma}_1 \in \Bbb R^{k \times k}, \quad {\bf V_1} \in \Bbb Q^{I_{2} \times k},
\end{eqnarray*}
where the singular value matrix ${\bf \Sigma}={\rm diag}(\sigma_1,\sigma_2,\ldots,\sigma_{I_{2}})$ with $\sigma_1\geq \sigma_2 \geq \dots \geq \sigma_{I_{2}} \geq 0 $, $k$ is the target rank. For oversampling parameter $p \geq 1$, let $q=0$, $k+p \leq I_{2}$, and the sample matrix ${\bf K}_{0}= \bf X \, \bf \Omega$, where $\bf \Omega$ is an $I_{2} \times (k+p)$ quaternion random test matrix, and $\bf \Omega_{1}=\bf V_{1}^* \bf \Omega$ is assumed to have full row rank, then the expected spectral-norm approximation error for the rank-$(k+p)$ matrix $ \hat{\bf X}^{(0)}_{k+p} $ (power-free case) {satisfies} 
\begin{eqnarray}
\E( \parallel \hat{{\bf X}}^{(0)}_{k+p}-{\bf X}\parallel_{2})\leq \big(1+3\sqrt{\frac{k}{4p+2}}\big)\sigma_{k+1}+\frac{3e\sqrt{4k+4p+2}}{2p+2}(\sum \limits_{j>k}\sigma_{j}^{2})^{\frac{1}{2}}.
\end{eqnarray}
 If $ q>0$, then the expected approximation of the spectral error {satisfies} 
\[
\E( \parallel \hat{{\bf X}}^{(q)}_{k+p}-{\bf X}\parallel_{2})\leq\big( \big(1+3\sqrt{\frac{k}{4p+2}}\big)\sigma_{k+1}^{2q+1}+ \frac{3e\sqrt{4k+4p+2}}{2p+2}(\sum \limits_{j>k}\sigma_{j}^{2(2q+1)})^{\frac{1}{2}}\big)^{\frac{1}{2q+1}}.
\]
\end{lem}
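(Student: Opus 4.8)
I would prove this bound as the quaternionic analogue of the Halko--Martinsson--Tropp analysis of the randomized range finder, in three stages: a deterministic structural estimate, an expectation computation over the Gaussian quaternion test matrix ${\bf \Omega}$, and a reduction of the power-iterated case $q>0$ to the power-free case. Since the statement is attributed to \cite{liu2022randomized}, the quaternion-specific ingredients below are precisely those supplied there.

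\emph{Deterministic step.} First I would use the QSVD ${\bf X}={\bf U}{\bf \Sigma}{\bf V}^H$ with the block partition of the statement and put ${\bf \Omega}_1={\bf V}_1^H{\bf \Omega}$, ${\bf \Omega}_2={\bf V}_2^H{\bf \Omega}$. Because ${\bf \Omega}_1$ has full row rank, right multiplying ${\bf U}^H({\bf X}{\bf \Omega})=\left[\begin{smallmatrix}{\bf \Sigma}_1{\bf \Omega}_1\\ {\bf \Sigma}_2{\bf \Omega}_2\end{smallmatrix}\right]$ by the full column rank matrix ${\bf \Omega}_1^{\dagger}{\bf \Sigma}_1^{-1}$ shows that the column space of ${\bf Q}^{(1)}$ contains $\operatorname{range}({\bf N})$, where ${\bf N}={\bf U}\left[\begin{smallmatrix}{\bf I}_k\\ {\bf F}\end{smallmatrix}\right]$ and ${\bf F}={\bf \Sigma}_2{\bf \Omega}_2{\bf \Omega}_1^{\dagger}{\bf \Sigma}_1^{-1}$. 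Writing ${\bf P}_{\bf M}$ for the orthogonal projector onto $\operatorname{range}({\bf M})$, the functional ${\bf M}\mapsto\|({\bf I}-{\bf P}_{\bf M}){\bf X}\|_2$ is nonincreasing under enlargement of $\operatorname{range}({\bf M})$ — a fact resting only on the positive semidefinite ordering and the spectral theorem for Hermitian quaternion matrices, hence transferring verbatim — so $\|({\bf I}-{\bf Q}^{(1)}{\bf Q}^{(1)H}){\bf X}\|_2\le\|({\bf I}-{\bf P}_{\bf N}){\bf X}\|_2$. Expanding ${\bf P}_{\bf N}$ and applying elementary block-norm arithmetic then yields the deterministic inequality
\[
\|({\bf I}-{\bf Q}^{(1)}{\bf Q}^{(1)H}){\bf X}\|_2^2\ \le\ \|{\bf \Sigma}_2\|_2^2+\|{\bf \Sigma}_2{\bf \Omega}_2{\bf \Omega}_1^{\dagger}\|_2^2 .
\]
The only extra care here is bookkeeping of left versus right multiplication, since $\mathbb{Q}$ is noncommutative; commutativity itself is never used.

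\emph{Expectation step.} Because $[{\bf V}_1,{\bf V}_2]$ is unitary and a standard Gaussian quaternion matrix is invariant in distribution under left multiplication by a quaternion unitary (such multiplication preserves the Frobenius norm, hence the Euclidean norm on the underlying $\mathbb{R}^{4I_2(k+p)}$), the stacked matrix $\left[\begin{smallmatrix}{\bf \Omega}_1\\ {\bf \Omega}_2\end{smallmatrix}\right]$ is itself standard Gaussian, so ${\bf \Omega}_1$ (size $k\times(k+p)$) and ${\bf \Omega}_2$ (size $(I_2-k)\times(k+p)$) are independent standard Gaussian quaternion matrices. I would then take square roots in the deterministic bound via $\sqrt{a+b}\le\sqrt{a}+\sqrt{b}$, giving $\|({\bf I}-{\bf Q}^{(1)}{\bf Q}^{(1)H}){\bf X}\|_2\le\sigma_{k+1}+\|{\bf \Sigma}_2{\bf \Omega}_2{\bf \Omega}_1^{\dagger}\|_2$, take expectations, and condition on ${\bf \Omega}_1$ using the quaternion analogue of $E_{\bf G}\|{\bf S}{\bf G}{\bf T}\|_2\le\|{\bf S}\|_2\|{\bf T}\|_F+\|{\bf S}\|_F\|{\bf T}\|_2$ for Gaussian ${\bf G}$ (with ${\bf S}={\bf \Sigma}_2$, ${\bf G}={\bf \Omega}_2$, ${\bf T}={\bf \Omega}_1^{\dagger}$). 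This reduces everything to the two moments $E\|{\bf \Omega}_1^{\dagger}\|_F$ (bounded via Jensen by the square root of $E\|{\bf \Omega}_1^{\dagger}\|_F^2$) and $E\|{\bf \Omega}_1^{\dagger}\|_2$, which I would supply from the quaternion ($\beta=4$) Wishart estimates of \cite{liu2022randomized}. Combining with $\|{\bf \Sigma}_2\|_2=\sigma_{k+1}$ and $\|{\bf \Sigma}_2\|_F=(\sum_{j>k}\sigma_j^2)^{1/2}$ produces the $q=0$ bound; the constants $4p+2$, $4k+4p+2$ and the prefactor $3$ are precisely the quaternion-specific pieces coming from those moment estimates and the four real degrees of freedom per entry.

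\emph{Power iteration and the main obstacle.} For $q>0$, the ${\bf Q}^{(1)}$ delivered by Algorithm \ref{ALg_1} spans $\operatorname{range}\big(({\bf X}{\bf X}^H)^q{\bf X}{\bf \Omega}\big)$, so I would set ${\bf B}=({\bf X}{\bf X}^H)^q{\bf X}$ (same left singular vectors as ${\bf X}$, singular values $\sigma_j^{2q+1}$), apply the $q=0$ bound to ${\bf B}$ to control $E\|({\bf I}-{\bf Q}^{(1)}{\bf Q}^{(1)H}){\bf B}\|_2$ by the quantity inside the outer $(2q+1)$-th root of the statement, and then invoke the operator inequality $\|{\bf P}'{\bf M}{\bf P}'\|_2\le\|{\bf P}'{\bf M}^{2q+1}{\bf P}'\|_2^{1/(2q+1)}$ for the PSD matrix ${\bf M}={\bf X}{\bf X}^H$ and projector ${\bf P}'={\bf I}-{\bf Q}^{(1)}{\bf Q}^{(1)H}$ (a consequence of operator concavity of $t\mapsto t^{1/(2q+1)}$, valid on Hermitian quaternion matrices), which rearranges to $\|({\bf I}-{\bf Q}^{(1)}{\bf Q}^{(1)H}){\bf X}\|_2\le\|({\bf I}-{\bf Q}^{(1)}{\bf Q}^{(1)H}){\bf B}\|_2^{1/(2q+1)}$; a final Jensen step for the concave scalar map $t\mapsto t^{1/(2q+1)}$ then gives the stated bound. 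The hard part is the expectation step, specifically the two moment bounds for ${\bf \Omega}_1^{\dagger}$: a standard Gaussian quaternion matrix does not reduce to an i.i.d.\ real Gaussian matrix under its $4\times 4$ real representation (which has a rigid block structure with sign-repeated entries), so the classical real or complex inverse-Wishart identities cannot simply be quoted, and one must control $E[\operatorname{tr}(({\bf \Omega}_1{\bf \Omega}_1^H)^{-1})]$ and the expected norm of the inverse directly in the quaternion Laguerre ensemble — the source of the $4p+2$ and $4k+4p+2$ constants and the point where the argument relies on \cite{liu2022randomized}. Everything else (QSVD, pseudoinverse, projector monotonicity, operator concavity on Hermitian quaternion matrices) carries over from the real or complex setting essentially unchanged.
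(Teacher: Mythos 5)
The paper does not prove this lemma at all: it is imported verbatim from \cite{liu2022randomized}, and your reconstruction follows essentially the same route that the cited work takes, namely the Halko--Martinsson--Tropp template (deterministic bound via ${\bf \Omega}_1,{\bf \Omega}_2$ and the matrix ${\bf U}\bigl[\begin{smallmatrix}{\bf I}\\ {\bf F}\end{smallmatrix}\bigr]$, Gaussian conditioning with quaternion inverse-Wishart moments supplying the constants $4p+2$, $4k+4p+2$ and the factor $3$, then the $\|({\bf I}-{\bf P}){\bf X}\|_2\le\|({\bf I}-{\bf P})({\bf X}{\bf X}^H)^q{\bf X}\|_2^{1/(2q+1)}$ reduction plus Jensen for $q>0$). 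Your argument is sound, you correctly flag the one genuinely quaternion-specific ingredient (the moment bounds for ${\bf \Omega}_1^{\dagger}$ in the $\beta=4$ Laguerre ensemble) as the part that must be taken from the cited reference, and your power-iteration step is the same structural trick the paper itself uses in its Lemma~\ref{mn3} and Theorem~\ref{mn10}.
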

To derive spectral norm guarantees in the quaternionic setting, we first establish a structural result relating the spectral norm of projections of ${\bf X}$ and ${\bf K}=({\bf X}^{H} {\bf X})^{q}$ on the range space of the matrix ${\bf K}$. This is captured in the following lemma.

\begin{lem}\label{mn3}
Let $ {\bf X} \in \Bbb Q^{I_{1} \times I_{2}} $ has nonnegative singular values $ \sigma_{i}$, in which $i=1,\dots, \min(I_{1},I_{2})$, $ k\geq 2 $ be the target rank and $ p \geq 2 $ is an oversampling parameter, with $ k+p\leq \min(I_{1},I_{2}) $. Compose a Gaussian random matrix ${\bf \Omega} \in \Bbb Q^{I_{2} \times (k+p)}$ and put $ {\bf K}=({\bf X}^{H} {\bf X})^{q} {\bf \Omega} $, in which $ q\geq 1 $. Suppose that the orthonormal matrix $ {\bf Q}^{(2)} \in \Bbb Q^{I_{2} \times (k+p)}$ 
forms a basis for the range of $ {\bf K} $. Then 
\begin{eqnarray*}
\parallel {\bf X}({\bf I}-{\bf Q}^{(2)}{\bf Q}^{(2)H})\parallel_{2}^{2q}\leq \parallel ({\bf X}^{H}{\bf X})^{q}({\bf I}-{\bf Q}^{(2)}{\bf Q}^{(2)H}) \parallel_{2}.
\end{eqnarray*}
\end{lem}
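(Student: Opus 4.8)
The plan is to notice that the asserted inequality is a purely deterministic statement about ${\bf X}$ and the orthogonal projector ${\bf P}:={\bf I}-{\bf Q}^{(2)}{\bf Q}^{(2)H}$; the hypotheses on ${\bf \Omega}$, ${\bf Y}$ only pin down which particular projector arises in the algorithm, so randomness never enters the argument. Set ${\bf B}:={\bf X}^{H}{\bf X}$, a Hermitian positive semidefinite quaternion matrix, and recall ${\bf P}={\bf P}^{H}={\bf P}^{2}$. Using the identity $\|{\bf M}\|_{2}^{2}=\|{\bf M}^{H}{\bf M}\|_{2}$ together with the Hermitian symmetry of ${\bf B}$ and of ${\bf B}^{q}$, I would first record the two reductions $\|{\bf X}{\bf P}\|_{2}^{2}=\|{\bf P}{\bf B}{\bf P}\|_{2}$ and $\|({\bf X}^{H}{\bf X})^{q}{\bf P}\|_{2}^{2}=\|{\bf P}{\bf B}^{2q}{\bf P}\|_{2}$. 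Hence it suffices to prove $\|{\bf P}{\bf B}{\bf P}\|_{2}^{2q}\le\|{\bf P}{\bf B}^{2q}{\bf P}\|_{2}$, after which taking square roots recovers the claim.

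Next I would extract the worst-case direction. Since ${\bf P}{\bf B}{\bf P}$ is Hermitian positive semidefinite, $\|{\bf P}{\bf B}{\bf P}\|_{2}=\lambda_{\max}({\bf P}{\bf B}{\bf P})=:\lambda$ is attained at a unit eigenvector ${\bf u}$. If $\lambda=0$ the bound is trivial; otherwise ${\bf u}=\lambda^{-1}{\bf P}{\bf B}{\bf P}{\bf u}$ lies in $\mathrm{range}({\bf P})$, so ${\bf P}{\bf u}={\bf u}$ and therefore $\lambda={\bf u}^{H}{\bf B}{\bf u}$. Consequently $\|{\bf P}{\bf B}^{2q}{\bf P}\|_{2}\ge{\bf u}^{H}{\bf P}{\bf B}^{2q}{\bf P}{\bf u}={\bf u}^{H}{\bf B}^{2q}{\bf u}$, and the matrix inequality is reduced to the scalar inequality $({\bf u}^{H}{\bf B}{\bf u})^{2q}\le{\bf u}^{H}{\bf B}^{2q}{\bf u}$.

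Finally I would close with the spectral theorem for Hermitian quaternion matrices (as in \cite{zhang1997quaternions}): write ${\bf B}={\bf W}{\bf \Lambda}{\bf W}^{H}$ with ${\bf W}$ unitary and ${\bf \Lambda}=\mathrm{diag}(\lambda_{1},\dots,\lambda_{I_{2}})$ real and nonnegative. Putting $c_{i}:=\|({\bf W}^{H}{\bf u})_{i}\|^{2}\ge 0$ (squared quaternion moduli of the components), one has $\sum_{i}c_{i}=\|{\bf u}\|_{2}^{2}=1$ and, since real scalars commute with quaternions, ${\bf u}^{H}{\bf B}^{s}{\bf u}=\sum_{i}c_{i}\lambda_{i}^{s}$ for every integer $s\ge 0$. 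Applying Jensen's inequality to the convex map $t\mapsto t^{2q}$ with the probability weights $(c_{i})$ gives $\big(\sum_{i}c_{i}\lambda_{i}\big)^{2q}\le\sum_{i}c_{i}\lambda_{i}^{2q}$, that is $({\bf u}^{H}{\bf B}{\bf u})^{2q}\le{\bf u}^{H}{\bf B}^{2q}{\bf u}$. Chaining everything, $\|{\bf X}{\bf P}\|_{2}^{4q}=({\bf u}^{H}{\bf B}{\bf u})^{2q}\le{\bf u}^{H}{\bf B}^{2q}{\bf u}\le\|{\bf P}{\bf B}^{2q}{\bf P}\|_{2}=\|({\bf X}^{H}{\bf X})^{q}{\bf P}\|_{2}^{2}$, and a square root finishes the proof. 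The only genuinely delicate steps are verifying that the maximizing eigenvector lies in $\mathrm{range}({\bf P})$ (which is exactly what lets us replace ${\bf P}{\bf B}{\bf P}$ by ${\bf B}$ on that vector) and invoking the quaternion spectral decomposition so that the Jensen step is legitimate over $\mathbb{Q}$; neither poses a real obstacle, and the rest is bookkeeping.
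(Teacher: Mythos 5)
Your proof is correct and is essentially the paper's argument: both rest on restricting the Rayleigh quotient of ${\bf B}={\bf X}^H{\bf X}$ to the range of the projector and applying Jensen's inequality in the eigenbasis of ${\bf B}$ to get the moment bound $({\bf v}^H{\bf B}{\bf v})^{s}\le {\bf v}^H{\bf B}^{s}{\bf v}$. The only difference is in the bookkeeping at the end: you symmetrize via $\|{\bf C}\|_2^2=\|{\bf C}^H{\bf C}\|_2$ and evaluate at the top eigenvector of ${\bf P}{\bf B}{\bf P}$ (which automatically lies in $\mathrm{range}({\bf P})$), whereas the paper maximizes over unit vectors in the range of the projector and closes with a Cauchy--Schwarz estimate on ${\bf v}^H{\bf B}^{q}{\bf v}$; both finishes are valid.
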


\begin{proof}
Let 
\[
{\bf M} = {\bf I} - {\bf Q}^{(2)}{\bf Q}^{(2)H},
\]
so \({\bf M}^2={\bf M}={\bf M}^H\).  Write \({\bf X}={\bf U}{\bf \Sigma} {\bf V}^H\), hence \({\bf X}^H {\bf X}={\bf V}{\bf \Sigma}^2{\bf V}^H\), and set \({\bf A}={\bf X}^H {\bf X}\).  
By the Rayleigh quotient formula,
\[
\|{\bf X M}\|_2^2 
= \max_{\|{\bf u}\|=1} \|{\bf X M u}\|^2
= \max_{\|{\bf u}\|=1} {\bf u}^H {\bf M A M u}.
\]
Put \({\bf v}={\bf Mu}\).  Then \(\|{\bf v}\|\le1\) and \({\bf v}\in\mathrm{range}({\bf M})\), so
\[
\|{\bf X M}\|_2^2
= \max_{\substack{\|{\bf v}\|\le1 \\ v\in\mathrm{range}({\bf M})}} {\bf v}^H {\bf A v}.
\]
Raising to the \(q\)th power (\(q\ge1\)), monotonicity of \(t\mapsto t^q\) gives
\[
\|{\bf X M}\|_2^{2q}
= \Bigl(\max_{\substack{\|{\bf v}\|\le1 \\ {\bf v}\in\mathrm{range}({\bf M})}} {\bf v}^H {\bf A} {\bf v}\Bigr)^q
\le \max_{\substack{\|{\bf v}\|\le1 \\ {\bf v}\in\mathrm{range}({\bf M})}} ({\bf v}^H {\bf A} {\bf v})^q.
\]
By Jensen’s inequality (applied to the probability measure given by \(|v_i|^2\) in \({\bf A}\)’s eigenbasis),
\[
({\bf v}^H {\bf A v})^q \le {\bf v}^H {\bf A}^q {\bf v} 
\quad\text{for all }\|{\bf v}\|\le1.
\]
Thus
\[
\|{\bf X M}\|_2^{2q}
\le \max_{\substack{\|{\bf v}\|\le1 \\ v\in\mathrm{range}({\bf M})}} {\bf v}^H {\bf A}^q {\bf v}
= \max_{\substack{\|{\bf v}\|=1 \\ v\in\mathrm{range}({\bf M})}} {\bf v}^H {\bf A}^q {\bf v}.
\]
For any \({\bf v} \in \mathrm{range}({\bf M})\) with \(\|{\bf v}\|=1\), we have \({\bf Mv} = {\bf v}\). By the Cauchy-Schwarz inequality (valid in \(\mathbb{Q}\)),
\[
{\bf v}^H {\bf A}^q {\bf v} \leq \|{\bf v}\| \cdot \|{\bf A}^q {\bf v}\| = \|{\bf A}^q {\bf v}\|.
\]
Since \({\bf v} = {\bf Mv}\),
\[
\|{\bf A}^q {\bf v}\| = \|{\bf A}^q {\bf M v}\| \leq \|{\bf A}^q {\bf M}\|_2 \|{\bf v}\| = \|{\bf A}^q {\bf M}\|_2.
\]
Hence,
\[
{\bf v}^H {\bf A}^q {\bf v} \leq \|{\bf A}^q {\bf M}\|_2,
\]
and taking the maximum over \({\bf v}\),
\[
\max_{\substack{\|{\bf v}\|=1 \\ {\bf v}\in\mathrm{range}({\bf M})}} {\bf v}^H {\bf A}^q {\bf v} \leq \|{\bf A}^q {\bf M}\|_2 = \|({\bf X}^H {\bf X})^q {\bf M}\|_2.
\]
Therefore,
\[
\|{\bf X M}\|_2^{2q} \leq \|({\bf X}^H {\bf X})^q {\bf M}
\|_2.
\]
\end{proof}

For an even number of views, Algorithm \ref{ALg_2} is equivalent to Algorithm \ref{ALg_1} and its expected approximation error (in the spectral norm) can be approximated by Lemma \ref{mn1}. 
With Lemmas 1 and 2 in place, we are now ready to state our main theoretical result. Theorem 3 quantifies the spectral norm error of our pass-efficient algorithm \ref{ALg_2} in terms of an odd number of matrix views and the spectral decay of the input.

\begin{thm}\label{mn10}
With the notations in Lemma \ref{mn3}, the expected spectral norm of the approximation error of Algorithm \ref{ALg_2} satisﬁes
\begin{eqnarray*}
\E (\parallel {\bf X}-{\bf X}{\bf Q}^{(2)}{\bf Q}^{(2)H}\parallel_{2}) \leq \Big(\big(1+3\sqrt{\frac{k}{4p+2}}\big)\sigma_{k+1}^{2q}+\frac{3e\sqrt{4k+4p+2}}{2p+2}(\sum \limits_{j>k}\sigma_{j}^{4q})^{\frac{1}{2}} \Big)^{\frac{1}{2q}}.
\end{eqnarray*}
\end{thm}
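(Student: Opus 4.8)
The plan is to reduce the odd-view case of Algorithm \ref{ALg_2} to the already-known power-iteration bound of Lemma \ref{mn1} by means of the structural inequality of Lemma \ref{mn3}. Observe that when the budget $v$ is odd, the matrix ${\bf Q}^{(2)}$ returned by Algorithm \ref{ALg_2} spans the range of $({\bf X}^H{\bf X})^q{\bf \Omega}$ with $q=(v-1)/2$; this is exactly the matrix ${\bf Y}$ appearing in Lemma \ref{mn3}. So I would begin by identifying ${\bf M}={\bf I}-{\bf Q}^{(2)}{\bf Q}^{(2)H}$ and invoking Lemma \ref{mn3} to obtain the pointwise (in $\omega$) bound
\[
\|{\bf X}{\bf M}\|_2^{2q}\le\|({\bf X}^H{\bf X})^q{\bf M}\|_2.
\]

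Next I would recognize the right-hand side as the approximation error of a \emph{standard} randomized rangefinder applied to the Hermitian positive-semidefinite matrix ${\bf A}={\bf X}^H{\bf X}=({\bf V}{\bf \Sigma}^2{\bf V}^H)$, using the Gaussian test matrix ${\bf \Omega}$ and \emph{no} power iteration. Indeed $({\bf X}^H{\bf X})^q$ has the same singular vectors ${\bf V}$ as ${\bf X}$ and singular values $\sigma_i^{2q}$, and ${\bf Q}^{(2)}$ is a basis for $({\bf X}^H{\bf X})^q{\bf \Omega}$, i.e.\ the $q=0$ (power-free) rangefinder output for the matrix $({\bf X}^H{\bf X})^q$. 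Applying Lemma \ref{mn1} with $q=0$ to the matrix $({\bf X}^H{\bf X})^q$ — whose $(k+1)$-st singular value is $\sigma_{k+1}^{2q}$ and whose tail is $\sum_{j>k}\sigma_j^{4q}$ — yields
\[
E\big(\|({\bf X}^H{\bf X})^q{\bf M}\|_2\big)\le\big(1+3\sqrt{\tfrac{k}{4p+2}}\big)\sigma_{k+1}^{2q}+\frac{3e\sqrt{4k+4p+2}}{2p+2}\Big(\sum_{j>k}\sigma_j^{4q}\Big)^{1/2}.
\]
Finally, take expectations in the Lemma \ref{mn3} inequality, use that $t\mapsto t^{1/(2q)}$ is concave so Jensen gives $E(\|{\bf X}{\bf M}\|_2)=E\big((\|{\bf X}{\bf M}\|_2^{2q})^{1/(2q)}\big)\le\big(E\|{\bf X}{\bf M}\|_2^{2q}\big)^{1/(2q)}\le\big(E\|({\bf X}^H{\bf X})^q{\bf M}\|_2\big)^{1/(2q)}$, and substitute the displayed bound to get the claimed estimate.

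The main obstacle I anticipate is the legitimacy of applying Lemma \ref{mn1} (which is stated for $q=0$ rangefinders applied to a general quaternion matrix) to the matrix $({\bf X}^H{\bf X})^q$: one must check that the hypotheses transfer — in particular that ${\bf \Omega}_1={\bf V}_1^*{\bf \Omega}$ has full row rank (the same genericity condition on the Gaussian $\bf\Omega$ works for both ${\bf X}$ and $({\bf X}^H{\bf X})^q$ since they share ${\bf V}_1$), that $k+p\le\min(I_1,I_2)$ is inherited, and that the error $\|({\bf X}^H{\bf X})^q{\bf M}\|_2$ genuinely equals $\|({\bf I}-{\bf Q}^{(2)}{\bf Q}^{(2)H})({\bf X}^H{\bf X})^q\|_2$ up to the Hermitian symmetry of the projector (here $({\bf I}-{\bf Q}^{(2)}{\bf Q}^{(2)H})$ is Hermitian and idempotent, so $\|{\bf B}{\bf M}\|_2=\|{\bf M}{\bf B}^H\|_2=\|({\bf X}^H{\bf X})^q{\bf M}\|_2$ since $({\bf X}^H{\bf X})^q$ is itself Hermitian). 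A secondary point worth stating carefully is that the two Jensen-type steps go in compatible directions: the concavity of $t^{1/(2q)}$ is what lets the outer expectation pass inside the $1/(2q)$ power, and this is the only place randomness of ${\bf \Omega}$ enters beyond Lemma \ref{mn1} itself.
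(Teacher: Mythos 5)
Your proposal is correct and follows essentially the same route as the paper's proof: apply Lemma \ref{mn3} pointwise, pull the expectation inside the $1/(2q)$ power (the paper invokes H\"older's inequality where you use Jensen/concavity of $t\mapsto t^{1/(2q)}$ — the same moment inequality), use the Hermitian symmetry of $({\bf X}^H{\bf X})^q$ and of the projector to identify the error with the power-free rangefinder error for ${\bf Z}=({\bf X}^H{\bf X})^q$, and then apply Lemma \ref{mn1} with $q=0$ to ${\bf Z}$, whose singular values are $\sigma_j^{2q}$. Your added checks (full row rank of ${\bf V}_1^H{\bf \Omega}$ transferring since ${\bf X}$ and ${\bf Z}$ share right singular vectors, and $\|{\bf Z}{\bf M}\|_2=\|{\bf M}{\bf Z}\|_2$) are details the paper leaves implicit but are consistent with its argument.
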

\begin{proof}
Let $ {\bf Z}=({\bf X}^{H} {\bf X})^{q}$. The Hölder’s inequality and Lemma \ref{mn3} concludes that
\begin{eqnarray*}
\E (\parallel {\bf X}-{\bf X}{\bf Q}^{(2)}{\bf Q}^{(2)H}\parallel_{2})&\leq & (\E (\parallel {\bf X}-{\bf X}{\bf Q}^{(2)}{\bf Q}^{(2)H}\parallel_{2}^{2q}))^{\frac{1}{2q}}\\
&\leq & (\E(\parallel {\bf Z}-{\bf Z} {\bf Q}^{(2)}{\bf Q}^{(2)H}\parallel_{2}))^{\frac{1}{2q}}.
\end{eqnarray*}
Further, 
\begin{eqnarray*}
\nonumber {\E}(\parallel {\bf Z}-{\bf Z} {\bf Q}^{(2)}{\bf Q}^{(2)H}\parallel_{2})&=&\E(\parallel {\bf Z}^{H}-{\bf Q}^{(2)}{\bf Q}^{(2)H}  {\bf Z}^{H}\parallel_{2})\\
& \leq & \big (1+3\sqrt{\frac{k}{4p+2}}\big)\sigma_{k+1}^{2q}+\frac{3e\sqrt{4k+4p+2}}{2p+2}(\sum \limits_{j>k}\sigma_{j}^{4q})^{\frac{1}{2}},
\end{eqnarray*}
where $\sigma_{j}^{2q}$, for $ j=1,\dots,\min(I_{1},I_{2}) $, are the singular values of the matrix $ {\bf Z} $. The last inequality is obtained from applying Lemma \ref{mn1} in the case $ q=0 $ of the matrix $ {\bf Z} $.
\end{proof}

Theorem \ref{mn10} concludes that the expected error of the randomized approximation decays exponentially with 
the number of matrix views. For the accuracy of Algorithm 1,\ref{ALg_2} it is necessary that the spectrum of matrix $ {\bf X} $ decays rapidly, by Lemma \ref{mn1} and Theorem  \ref{mn10}. According to Lemma \ref{mn1} and Theorem \ref{mn10}, we deduce that if the number of matrix views $ v$ be is even, then Algorithm \ref{ALg_2} presents matrices $ {\bf Q}^{(1)} $ and $ {\bf Q}^{(2)} $ such that 
\begin{eqnarray}\label{mn11}
\nonumber & & \E(\parallel {\bf X}- {\bf Q}^{(1)} {\bf Q}^{(1)H} {\bf X}  \parallel_{2}) \leq \Big(\big(1+3\sqrt{\frac{k}{4p+2}}\big)\sigma_{k+1}^{v-1}+\frac{3e\sqrt{4k+4p+2}}{2p+2}(\sum \limits_{j>k}\sigma_{j}^{2(v-1)})^{\frac{1}{2}} \Big)^{\frac{1}{v-1}}.\\
& & \E(\parallel {\bf X}-  {\bf X}{\bf Q}^{(2)} {\bf Q}^{(2)H}   \parallel_{2}) \leq \Big(\big(1+3\sqrt{\frac{k}{4p+2}}\big)\sigma_{k+1}^{v}+\frac{3e\sqrt{4k+4p+2}}{2p+2}(\sum \limits_{j>k}\sigma_{j}^{2v})^{\frac{1}{2}} \Big)^{\frac{1}{v}}.
\end{eqnarray}
Also, If the number of matrix views $ v $ be an odd number, we have
\begin{eqnarray}\label{mn12}
& &\E(||{\bf X}- {\bf Q}^{(1)} {\bf Q}^{(1)H} {\bf X}||_{2}) \leq \Big(\big(1+3\sqrt{\frac{k}{4p+2}}\big)\sigma_{k+1}^{v}+\frac{3e\sqrt{4k+4p+2}}{2p+2}(\sum \limits_{j>k}\sigma_{j}^{2v})^{\frac{1}{2}} \Big)^{\frac{1}{v}}.\\
\nonumber & & \E(|| {\bf X}-  {\bf X}{\bf Q}^{(2)} {\bf Q}^{(2)H}||_{2}) \leq \Big(\big(1+3\sqrt{\frac{k}{4p+2}}\big)\sigma_{k+1}^{v-1}+\frac{3e\sqrt{4k+4p+2}}{2p+2}(\sum \limits_{j>k}\sigma_{j}^{2(v-1)})^{\frac{1}{2}} \Big)^{\frac{1}{v-1}}.
\end{eqnarray}
From ${\bf V}={\bf Q}^{(2)}\widehat{\bf V}$ and ${\bf U}={\bf Q}^{(1)}\widehat{\bf U}$, we deduce that the right and left singular vectors are as a linear combination of the columns ${\bf Q}^{(1)}$ and ${\bf Q}^{(2)}$, respectively. Moreover, relation \eqref{mn11} shows that for an even number of matrix views $ v $, $ {\bf V} $ can be more accurate than or as accurate as $ {\bf U} $. If $ v $ is an odd number, then relation \eqref{mn12} shows that $ {\bf U} $ is more accurate than or as accurate as $ {\bf V} $. Hence, for approximating the right singular vectors of matrix ${\bf X}$, when the number of views $v$ is an even number, it can be more accurate to apply Algorithm \ref{ALg_2} to ${\bf X}$. Also, when $ v $ is an odd number and our goal is to approximate the right singular vectors, we apply Algorithm \ref{ALg_2} to ${\bf X}^{H}$.

In the next section, we extend our approach to block Krylov subspace methods to improve convergence while preserving pass-efficiency.

\section{Proposed pass-efficient block Krylov algorithms for quaternion matrices}
\label{sec:BKrylov}

\providecommand{\E}{\mathbb{E}}
\providecommand{\Ran}{\mathrm{Ran}}
\providecommand{\Proj}{\mathbf{P}}

The randomized subspace iteration developed in Section~\ref{sec:propo} extends
naturally to \emph{block Krylov} methods, which are often observed to accelerate
convergence when the spectrum of ${\bf X}$ decays slowly. The key idea is to
enrich the sampled range by stacking multiple Krylov blocks (odd powers of
${\bf X}{\bf X}^H$ applied to a single random probe), while keeping the number
of matrix \emph{views} (multiplications by ${\bf X}$ or ${\bf X}^H$) under
explicit control.

\paragraph{Classical block Krylov scheme (even views).}
For a quaternion data matrix ${\bf X}\in\mathbb{Q}^{I_1\times I_2}$, oversampling
$p\ge1$, target rank $k$, and depth $q\ge0$, the standard block Krylov
construction forms
\[
\mathcal{K}_q({\bf X}{\bf X}^H,{\bf X}{\bf \Omega})
=\operatorname{span}\big\{{\bf X}{\bf \Omega},({\bf X}{\bf X}^H){\bf X}{\bf \Omega},\dots,({\bf X}{\bf X}^H)^q{\bf X}{\bf \Omega}\big\},
\]
where ${\bf \Omega}\in\mathbb{Q}^{I_2\times(k+p)}$ is quaternion Gaussian.  In
Algorithm~\ref{ALg_block}, forming ${\bf K}_0={\bf X}{\bf\Omega}$ costs one view,
each update ${\bf K}_i=({\bf X}{\bf X}^H){\bf K}_{i-1}$ costs two views, and the
compression step ${\bf Q}^H{\bf X}$ costs one more view. Hence the algorithm uses
exactly $2(q+1)$ views of ${\bf X}$.

\RestyleAlgo{ruled}
\begin{algorithm}
{\smaller
\LinesNumbered
\SetKwInOut{Input}{Input}
\SetKwInOut{Output}{Output}
\Input{${\bf X}\in\mathbb{Q}^{I_1\times I_2}$; target rank $k$; oversampling $p$; depth $q$.}
\Output{Truncated QSVD ${\bf X}\approx {\bf U}_{k}{\bf S}_{k}{\bf V}_{k}^H$.}
\caption{Block randomized subspace method (classical, even views)}
\label{ALg_block}
${\bf \Omega}\leftarrow \text{quaternion Gaussian of size } I_2\times(k+p)$\;
${\bf K}_0\leftarrow {\bf X}\,{\bf \Omega}$\;
\For{$i=1,\ldots,q$}{
  ${\bf K}_i \leftarrow ({\bf X}{\bf X}^H)\,{\bf K}_{i-1}$\;
}
${\bf K}\leftarrow[\,{\bf K}_0\ {\bf K}_1\ \cdots\ {\bf K}_q\,]$\;
$[{\bf Q},\sim]\leftarrow \mathrm{QR}({\bf K})$\;
$\tilde{\bf Y}\leftarrow {\bf Q}^H{\bf X}$\;
$[{\bf U}_{\tilde{Y}},{\bf S}_{\tilde{Y}},{\bf V}_{\tilde{Y}}]\leftarrow \mathrm{SVD}({\tilde{\bf Y}})$\;
$\hat{\bf U}\leftarrow {\bf Q}{\bf U}_{\tilde{Y}}$;\quad
${\bf U}_k\leftarrow \hat{\bf U}(:,1\!:\!k)$,\ 
${\bf S}_k\leftarrow {\bf S}_{\tilde{Y}}(1\!:\!k,1\!:\!k)$,\
${\bf V}_k\leftarrow {\bf V}_{\tilde{Y}}(:,1\!:\!k)$\;
}
\end{algorithm}

\paragraph{Flexible block Krylov (arbitrary views).}
In analogy with Algorithm~\ref{ALg_2} (Section~\ref{sec:propo}), we can aggregate
Krylov blocks to match any prescribed view budget $v\ge2$, building either a
range basis ${\bf Q}^{(1)}$ (even $v$) or a co-range basis ${\bf Q}^{(2)}$ (odd $v$).
Algorithm~\ref{ALg_block_flix} implements this strategy.  When $v=2(q+1)$ is
even, Algorithm~\ref{ALg_block_flix} reduces to Algorithm~\ref{ALg_block}.

\RestyleAlgo{ruled}

\begin{algorithm}
{\smaller
\LinesNumbered
\SetKwInOut{Input}{Input}
\SetKwInOut{Output}{Output}
\Input{${\bf X}\in\mathbb{Q}^{I_1\times I_2}$; target rank $k$; oversampling $p$; view budget $v\ge2$.}
\Output{Truncated QSVD ${\bf X}\approx {\bf U}{\bf S}{\bf V}^H$.}
\caption{Block randomized truncated SVD with an arbitrary number of views}
\label{ALg_block_flix}

${\bf \Omega}\leftarrow \text{quaternion Gaussian of size } I_2\times(k+p)$\;
${\bf Q}^{(2)}_0 \leftarrow {\bf \Omega}$\;
$s\leftarrow \lfloor (v-1)/2 \rfloor$\;

\For{$j=1,\ldots,s$}{
  \tcp{odd view: range side}
  \If{$2j-1 < v$}{
    $[{\bf Q}^{(1)}_j,\sim]\leftarrow \mathrm{QR}({\bf X}{\bf Q}^{(2)}_{j-1})$\;
  }
  \If{$2j < v$}{
    $[{\bf Q}^{(2)}_j,\sim]\leftarrow \mathrm{QR}({\bf X}^H{\bf Q}^{(1)}_{j})$\;
  }
}

\If{$v$ is even}{
  $[{\bf Q}^{(1)}_{s+1},\sim]\leftarrow \mathrm{QR}({\bf X}{\bf Q}^{(2)}_{s})$\;
}

\eIf{$v$ is even}{
  \tcp{range projector (even $v$): stack ${\bf Q}^{(1)}_1,\dots,{\bf Q}^{(1)}_{v/2}$}
  ${\bf K}_1\leftarrow [\,{\bf Q}^{(1)}_1\ {\bf Q}^{(1)}_2\ \cdots\ {\bf Q}^{(1)}_{v/2}\,]$\;
  $[{\bf Q}^{(1)},\sim]\leftarrow \mathrm{QR}({\bf K}_1)$\;
  $[{\bf Q}^{(2)},{\bf R}^{(2)}]\leftarrow \mathrm{QR}({\bf X}^H{\bf Q}^{(1)})$\; 
  $[\widehat{\bf V},{\bf S},\widehat{\bf U}]\leftarrow \mathrm{TSVD}({\bf R}^{(2)},k)$\;
}{
  \tcp{co-range projector (odd $v$): stack ${\bf Q}^{(2)}_1,\dots,{\bf Q}^{(2)}_{(v-1)/2}$}
  ${\bf K}_2\leftarrow [\,{\bf Q}^{(2)}_1\ {\bf Q}^{(2)}_2\ \cdots\ {\bf Q}^{(2)}_{(v-1)/2}\,]$\;
  $[{\bf Q}^{(2)},\sim]\leftarrow \mathrm{QR}({\bf K}_2)$\;
  $[{\bf Q}^{(1)},{\bf R}^{(1)}]\leftarrow \mathrm{QR}({\bf X}{\bf Q}^{(2)})$\; 
  $[\widehat{\bf U},{\bf S},\widehat{\bf V}]\leftarrow \mathrm{TSVD}({\bf R}^{(1)},k)$\;
}
${\bf V}\leftarrow {\bf Q}^{(2)}\widehat{\bf V}$,\quad
${\bf U}\leftarrow {\bf Q}^{(1)}\widehat{\bf U}$\;
}
\end{algorithm}

\subsection*{Corrected block-Krylov error bounds (no reverse-order law, no derived-Gaussian claims)}

The original quaternion block-Krylov analysis~\cite{qub} used a reverse-order
identity for the Moore--Penrose pseudoinverse at a key step of the proof of
their Theorem~4.2. That identity requires additional full-rank hypotheses which
are not satisfied by the block-stacked factors arising in the Krylov setting.
In particular, one cannot justify (in general) a step that implicitly treats a
derived stacked sampling factor as if it were an i.i.d.\ quaternion
Gaussian matrix.

We avoid this issue entirely. Our correction is based on two simple facts:
(i) the block Krylov subspace contains, as a subset, the single last
(power) sample; (ii) orthogonal projection error is monotone with respect to
subspace inclusion. This yields a simple reduction of block-Krylov error bounds
to the already-established quaternion randomized subspace-iteration bounds
(e.g.\ Lemma~\ref{mn1}).

\begin{lem}[Corrected block-Krylov bound via range inclusion]
\label{lem:corrected-block}
Let ${\bf X}\in\mathbb{Q}^{I_1\times I_2}$, target rank $k$, oversampling $p\ge1$,
and depth $q\ge0$. Run Algorithm~\ref{ALg_block} and let ${\bf Q}$ be the
orthonormal basis obtained from the stacked block Krylov matrix
\[
{\bf K}_{\rm stack}
=\big[\,{\bf X} {\bf \Omega},\ ( {\bf X}{\bf X}^H){\bf X}{\bf \Omega},\ \dots,\ ( {\bf X}{\bf X}^H)^q{\bf X}{\bf \Omega}\,\big],
\quad
{\bf \Omega}\in\mathbb{Q}^{I_2\times(k+p)}\ \text{quaternion Gaussian}.
\]
Define also the (single-block) power sample
\[
{\bf K}_{\rm pow}:=( {\bf X}{\bf X}^H)^q{\bf X}{\bf \Omega},
\]
and let  ${\bf Q}_{\rm pow}$ be an orthogonal matrix for the range of ${\bf K}_{\rm pow}$.

Then, deterministically,
\begin{equation}\label{eq:bkrylov-inclusion}
\|{\bf X}-{\bf Q}{\bf Q}^H{\bf X}\|_2 \ \le\ \|{\bf X}-{\bf Q}_{\rm pow}{\bf Q}_{\rm pow}^H{\bf X}\|_2.
\end{equation}
Consequently, using the standard quaternion subspace-iteration bound (Lemma~\ref{mn1}),
\begin{equation}\label{eq:BK-safe}
\E\|{\bf X}-{\bf Q}{\bf Q}^H{\bf X}\|_2
\ \le\
\Bigg(
\Big(1+3\sqrt{\tfrac{k}{\,4p+2\,}}\Big)\,\sigma_{k+1}^{\,2q+1}
\;+\;
\frac{3e\sqrt{\,4k+4p+2\,}}{\,2p+2\,}\,
\Big(\sum_{j>k}\sigma_j^{\,2(2q+1)}\Big)^{\!\frac12}
\Bigg)^{\!\frac1{2q+1}},
\end{equation}
where $\sigma_1\ge\sigma_2\ge\cdots$ are the singular values of ${\bf X}$.
\end{lem}

\begin{proof}
Since ${\bf K}_{\rm pow}$ is one block-column of ${\bf K}_{\rm stack}$, we have
$\Ran({\bf K}_{\rm pow})\subseteq \Ran({\bf K}_{\rm stack})$. Projection error is
monotone with respect to enlarging the projection subspace, hence
\eqref{eq:bkrylov-inclusion}. Taking expectation and applying Lemma~\ref{mn1} to
the power sample ${\bf K}_{\rm pow}=( {\bf X}{\bf X}^H)^q{\bf X}{\bf \Omega}$
yields \eqref{eq:BK-safe}.
\end{proof}

\begin{cor}[Block case $q=0$]\label{cor:q0}
With $q=0$ in Lemma~\ref{lem:corrected-block},
\[
\E\|{\bf X}-{\bf Q}{\bf Q}^H{\bf X}\|_2
\ \le\
\Big(1+3\sqrt{\tfrac{k}{\,4p+2\,}}\Big)\,\sigma_{k+1}
\;+\;
\frac{3e\sqrt{\,4k+4p+2\,}}{\,2p+2\,}\,
\Big(\sum_{j>k}\sigma_j^{\,2}\Big)^{\!\frac12}.
\]
\end{cor}

\medskip
\noindent\textbf{Approximation realized by Algorithm~\ref{ALg_block_flix}.}
If $v$ is \emph{even}, the algorithm returns a range projector and the approximation
is ${\bf X}\approx {\bf Q}^{(1)}{\bf Q}^{(1)H}{\bf X}$.
If $v$ is \emph{odd}, it returns a co-range projector and the approximation is
${\bf X}\approx {\bf X}{\bf Q}^{(2)}{\bf Q}^{(2)H}$.

\begin{thm}[Flexible block-Krylov bound for an arbitrary number of views]
\label{thm:flex-krylov}
Let ${\bf X}\in\mathbb{Q}^{I_1\times I_2}$ have QSVD ${\bf X}={\bf U}{ \bf\Sigma}{\bf V}^H$ with
singular values $\sigma_1\ge \sigma_2\ge\cdots$, target rank $k$, oversampling $p\ge1$,
and view budget $v\ge2$. Run Algorithm~\ref{ALg_block_flix} and let
${\bf Q}^{(1)}$ or ${\bf Q}^{(2)}$ be the basis it outputs according to the parity of $v$.
Define the exponent
\[
\alpha_v:=v-1.
\]

\textup{(a)} If $v$ is even, with ${\bf P}_v:={\bf Q}^{(1)}{\bf Q}^{(1)H}$,
\begin{equation}\label{eq:flex-even-safe}
\E\|{\bf X}-{\bf P}_v{\bf X}\|_2
\ \le\
\Bigg(
\Big(1+3\sqrt{\tfrac{k}{\,4p+2\,}}\Big)\,\sigma_{k+1}^{\,\alpha_v}
\;+\;
\frac{3e\sqrt{\,4k+4p+2\,}}{\,2p+2\,}\,
\Big(\sum_{j>k}\sigma_j^{\,2\alpha_v}\Big)^{\!\frac12}
\Bigg)^{\!\frac1{\alpha_v}}.
\end{equation}

\textup{(b)} If $v$ is odd, with ${\bf P}_v:={\bf Q}^{(2)}{\bf Q}^{(2)H}$,
\begin{equation}\label{eq:flex-odd-safe}
\E\|{\bf X}-{\bf X}{\bf P}_v\|_2
\ \le\
\Bigg(
\Big(1+3\sqrt{\tfrac{k}{\,4p+2\,}}\Big)\,\sigma_{k+1}^{\,\alpha_v}
\;+\;
\frac{3e\sqrt{\,4k+4p+2\,}}{\,2p+2\,}\,
\Big(\sum_{j>k}\sigma_j^{\,2\alpha_v}\Big)^{\!\frac12}
\Bigg)^{\!\frac1{\alpha_v}}.
\end{equation}
\end{thm}

\begin{proof}
\textbf{Even $v$ (range projector).}
Let $v=2(q+1)$, so $\alpha_v=2q+1$. By construction,
Algorithm~\ref{ALg_block_flix} forms and stacks Krylov blocks that include the
last power sample $( {\bf X}{\bf X}^H)^q{\bf X}{\bf \Omega}$ (up to intermediate
orthonormalizations which do not change the generated range). Hence the final
range basis ${\bf Q}^{(1)}$ spans a subspace that contains the range of the last
power sample. By monotonicity of orthogonal projection error with respect to
subspace inclusion, the approximation error is bounded by that of the standard
subspace-iteration/power scheme with exponent $2q+1=\alpha_v$. Applying Lemma~\ref{mn1}
yields \eqref{eq:flex-even-safe}.

\medskip
\textbf{Odd $v$ (co-range projector).}
Let $v=2s+1$, so $\alpha_v=2s$. Algorithm~\ref{ALg_block_flix} stacks co-range
iterates that include (up to orthonormalization) the last sample
\[
{\bf Y}:=( {\bf X}^H{\bf X})^{s}{\bf \Omega}\in\mathbb{Q}^{I_2\times(k+p)}.
\]
Let ${\bf Q}_{\rm last}$ span $\Ran({\bf Y})$ and set ${\bf P}_{\rm last}:={\bf Q}_{\rm last}{\bf Q}_{\rm last}^H$.
Since $\Ran({\bf Q}_{\rm last})\subseteq \Ran({\bf Q}^{(2)})$, again by monotonicity
\[
\|{\bf X}-{\bf X}{\bf P}_v\|_2 \ \le\ \|{\bf X}-{\bf X}{\bf P}_{\rm last}\|_2.
\]
Now set ${\bf M}:={\bf I}-{\bf P}_{\rm last}$ (orthogonal projector). A standard
deterministic reduction gives
\[
\|{\bf X}{\bf M}\|_2^{2s} \ \le\ \|({\bf X}^H{\bf X})^s{\bf M}\|_2.
\]
Because $({\bf X}^H{\bf X})^s$ is Hermitian, $\|({\bf X}^H{\bf X})^s{\bf M}\|_2=\|{\bf M}({\bf X}^H{\bf X})^s\|_2$,
hence
\[
\|{\bf X}-{\bf X}{\bf P}_{\rm last}\|_2^{\alpha_v}
=\|{\bf X}{\bf M}\|_2^{2s}
\ \le\
\|{\bf M}({\bf X}^H{\bf X})^s\|_2
=
\|({\bf X}^H{\bf X})^s-{\bf P}_{\rm last}({\bf X}^H{\bf X})^s\|_2.
\]
Taking expectations and Jensen (concavity of $t\mapsto t^{1/\alpha_v}$) yields
\[
\E\|{\bf X}-{\bf X}{\bf P}_{\rm last}\|_2
\ \le\
\Big(\E\|({\bf X}^H{\bf X})^s-{\bf P}_{\rm last}({\bf X}^H{\bf X})^s\|_2\Big)^{1/\alpha_v}.
\]
Finally apply the basic quaternion randomized range-finder bound (Lemma~\ref{mn1} with $q=0$)
to the matrix ${\bf A}:=({\bf X}^H{\bf X})^s$, whose singular values are
$\sigma_j({\bf A})=\sigma_j({\bf X})^{2s}=\sigma_j({\bf X})^{\alpha_v}$.
This gives \eqref{eq:flex-odd-safe}.
\end{proof}

\label{rem:flex-shapes}
\medskip
\noindent\textbf{Remark.}
The corrected bounds above are deliberately conservative: they certify that the
block Krylov spaces produced by Algorithms~\ref{ALg_block}--\ref{ALg_block_flix}
are at least as good as the corresponding last-iterate power samples, without
requiring any reverse-order pseudoinverse identity or any claim that a derived
stacked factor is quaternion Gaussian. In practice, stacking multiple Krylov
blocks typically improves performance beyond what is captured by these safe
bounds, which is consistent with the empirical observations in
Section~\ref{sec:experi}.
\smallskip

Sharper (less conservative) constants would require analyzing the actual
\emph{stacked} sampling factor that arises in the block-Krylov rearrangement:
it has deterministic zero off-diagonal subblocks and diagonal blocks that are
generally dependent, hence it is not an i.i.d.\ quaternion Gaussian matrix and
the usual rotational-invariance shortcuts do not apply. This issue is explicit
in the construction and block decomposition of the stacked factor; see
\cite[Eq.~(9), Eq.~(12)--(13)]{qub}. Developing such refined bounds
for dependent/structured Gaussian-like blocks is technically involved and is
left for future work.


\section{Execution cost}\label{Sec:compcost}

As highlighted in~\cite{bjarkason2019pass}, several practical considerations arise when applying Algorithm~\ref{ALg_2} to compute a low-rank approximation of a quaternion matrix.

First, the choice between applying Algorithm~\ref{ALg_2} to ${\bf X}$ or its conjugate transpose ${\bf X}^H$ depends on the matrix dimensions and pass count. For an \emph{even} number of views, the primary difference lies in the cost of generating the random Gaussian sampling matrix (Line 1 of Algorithm~\ref{ALg_2}), which amounts to $4I_2(p + k)T$, where $T$ denotes the cost of generating a single Gaussian random variable.\footnote{Faster random matrix generation schemes such as SRFT may be used, but they do not guarantee the error bounds established in Lemma~1 and Theorem~3.} To minimize computational overhead, ${\bf X}^H$ is preferred when ${\bf X}$ has more columns than rows; otherwise, using ${\bf X}$ is more efficient.

For an \emph{odd} number of views, the choice between ${\bf X}$ and ${\bf X}^H$ impacts not only the sampling cost but also the cost of additional QR decompositions and matrix multiplications. Specifically, the extra cost is on the order of
\[
\mathcal{O}\left(4I_2(p + k)T + (p + k)^2I_2 + (p + k)H\right),
\]
where $H$ represents the cost of multiplying the input matrix by a vector. Each term corresponds respectively to: (1) generating a random quaternion matrix, (2) performing a QR decomposition, and (3) a matrix-vector product.

Furthermore, it is straightforward to observe that Algorithm~\ref{ALg_block_flix} reduces to Algorithm~\ref{ALg_2} when $v < 4$. For larger values of $v$, however, the differences become more pronounced. Specifically, Algorithm~\ref{ALg_block_flix} multiplies the matrix ${\bf X}$ with $(v + \lfloor v/2 \rfloor - 1)(p + \ell)$ vectors and performs several QR decompositions, leading to an overall cost of
\[
\mathcal{O}\left(v^2 I_2 (p + \ell)^{2}\right).
\]
In contrast, Algorithm~\ref{ALg_2} performs $v(p + \ell)$ matrix-vector products and incurs a cost of
\[
\mathcal{O}\left(v I_2 (p + \ell)^2\right),
\]
assuming a square matrix size $I_1 = I_2$.

{In summary, Algorithm~\ref{ALg_block_flix} tends to have a higher computational cost per pass than Algorithm~\ref{ALg_2}. 
However, for $v \geq 4$, it typically achieves a given target accuracy using fewer passes, 
making it more efficient overall in practice for matrices with slowly decaying singular values.
}

\section{Pass-efficient quaternion randomization for fast image inpainting, image super-resolution and deep learning}\label{Sec:appl}

\paragraph{Quaternion Matrix Completion as a Core Tool}
Matrix completion aims to recover a low-rank matrix from a subset of its entries. It arises in diverse applications such as collaborative filtering, image inpainting, and super-resolution. Formally, let $\mathbf{M} \in \mathbb{Q}^{m \times n}$ denote the original low-rank matrix, observed only on a subset $\Omega$. The observed data is encoded via the projection operator:
\[
({\bf P}_{\Omega}({\bf{X}}))_{i,j}= 
\begin{cases}
{\bf X}_{ij}, & (i,j) \in \Omega \\
0, & \text{otherwise.}
\end{cases}
\]
The goal is to find a low-rank matrix $\mathbf{X}$ that agrees with $\mathbf{M}$ on the observed entries:
\begin{equation}
\min_{\mathbf{X}} \| \mathbf{P}_{\Omega}(\mathbf{X}) - \mathbf{P}_{\Omega}(\mathbf{M}) \|_F^2 \quad \text{s.t.} \quad \text{Rank}(\mathbf{X}) = R.
\label{MinRankCompl2}
\end{equation}
Using an auxiliary variable $\mathbf{C}$, this can be reformulated as:
\begin{equation}
\min_{\mathbf{X}, \mathbf{C}} \| \mathbf{X} - \mathbf{C} \|_F^2 \quad \text{s.t.} \quad
\begin{cases}
\text{Rank}(\mathbf{X}) = R \\
\mathbf{P}_{\Omega}(\mathbf{C}) = \mathbf{P}_{\Omega}(\mathbf{M})
\end{cases}
\label{MinRankCompl3}
\end{equation}
This problem can be solved via alternating updates:
\begin{align}
\mathbf{X}^{(n)} &\leftarrow \mathcal{L}(\mathbf{C}^{(n)}), \label{Step1} \\
\mathbf{C}^{(n+1)} &\leftarrow \mathbf{\Omega} \odot \mathbf{M} + (\mathbf{1} - \mathbf{\Omega}) \odot \mathbf{X}^{(n)}, \label{Step2}
\end{align}
where $\mathcal{L}$ denotes a low-rank approximation operator (e.g., our proposed randomized algorithm), $\odot$ is the Hadamard product, { and $\bf \Omega$ is the binary mask associated with the sampling set $\Omega$.}

\paragraph{Image Inpainting}
Image inpainting aims to reconstruct missing or corrupted regions of an image, restoring its visual coherence. This task is naturally formulated as a matrix completion problem when color image channels are modeled using quaternion-valued matrices. Our pass-efficient quaternion approximation methods are particularly suited for this task due to their low computational footprint.

\paragraph{Image Super-Resolution}
Super-resolution (SR) enhances the resolution of an image from a low-resolution input. By upsampling an image and artificially introducing missing rows and columns, the SR task becomes a structured inpainting problem. We apply the same quaternion matrix completion framework to recover the high-resolution image, effectively filling in the missing details using low-rank priors.

\paragraph{Improving Deep Learning Robustness}
As we demonstrate in Section~\ref{sec:experi}, many deep neural networks (DNNs) are highly sensitive to small input perturbations such as pixel dropouts or additive noise—an issue central to adversarial attacks~\cite{goodfellow2014explaining}. We show that applying our quaternion matrix completion as a preprocessing step improves DNN robustness under such perturbations, making this an effective defense strategy in sensitive applications.

\section{Experimental results}\label{sec:experi}

This section presents a series of numerical experiments that validate the effectiveness of the proposed pass-efficient quaternion algorithms across various tasks. All experiments were conducted in \textsc{Matlab} on a standard laptop equipped with an Intel(R) Core(TM) i7-10510U CPU and 16~GB of RAM. We evaluate three representative applications: (i) quaternionic image compression, (ii) image completion and super-resolution, and (iii) image segmentation, a key task in deep learning and computer vision. For simplicity, in all experiments, the random quaternion matrices used in Line~1 of Algorithms~\ref{ALg_1} and~\ref{ALg_2} were instantiated using real-valued Gaussian matrices.

\begin{exa}(Image compression)\label{exa_1} 
We evaluate the proposed algorithms on the Kodak image dataset\footnote{\url{http://www.cs.albany.edu/~xypan/research/snr/Kodak.html}}, selecting five images: ``Kodim13'', ``Kodim7'', ``Kodim17'', ``Kodim15'', and ``Kodim16''. All images are resized to $256 \times 256 \times 3$ for consistency and treated as pure quaternion-valued data.

Figure~\ref{benchmark} displays the selected test images. We apply the randomized low-rank approximation algorithms with rank $k = 30$, oversampling parameter $p = 5$, and varying numbers of passes. The reconstructed (compressed) images are shown in Figure~\ref{Res_1}, while Figure~\ref{Res_2} reports the corresponding CPU time and PSNR values.

As expected, increasing the number of passes improves reconstruction accuracy but also increases computation time. Notably, for the images tested, a three-pass randomized approximation achieves nearly the same PSNR as the four-pass version, while requiring less time. This highlights a key benefit of our pass-efficient framework: it enables users to balance approximation quality against computational budget, without committing to a fixed number of passes.

We also compare Algorithm~\ref{ALg_1} and the block-based Algorithm~\ref{ALg_block}. The latter yields slightly better approximation quality but incurs higher computational cost. Interestingly, its flexible variant, Algorithm~\ref{ALg_block_flix}, achieves a comparable approximation error with significantly reduced runtime. 

These results demonstrate that the proposed algorithms provide high-quality low-rank approximations with tunable efficiency, making them practical for real-world image compression tasks under varying resource constraints.

\begin{figure}
\begin{center}
\includegraphics[width=0.9\linewidth]{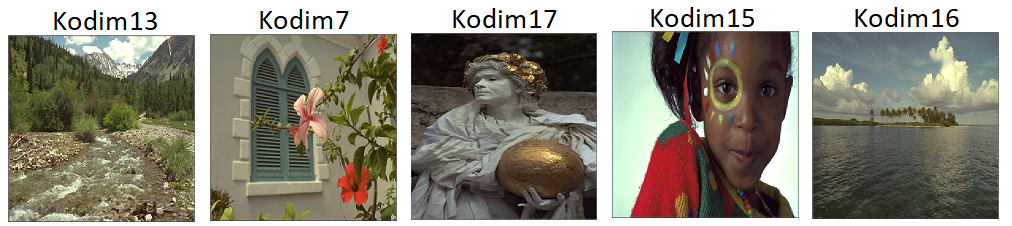}
\caption{\small{Benchmark images used in our simulation for image compression.}}\label{benchmark}
\end{center}
\end{figure}

\begin{figure}
\begin{center}
\includegraphics[width=0.9\linewidth]{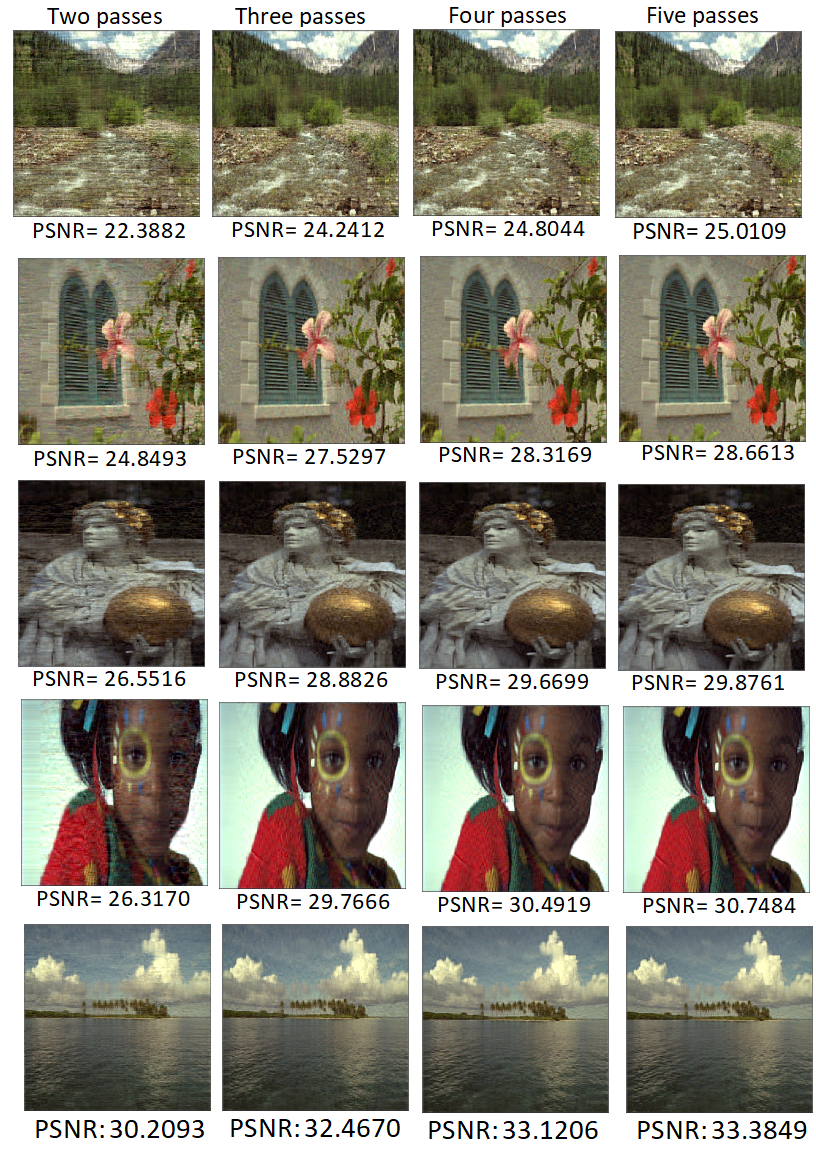}
\caption{\small{Compression results for four benchmark images and different passes.}}\label{Res_1}
\end{center}
\end{figure}

\begin{figure}
\begin{center}
\includegraphics[width=1.1\linewidth]{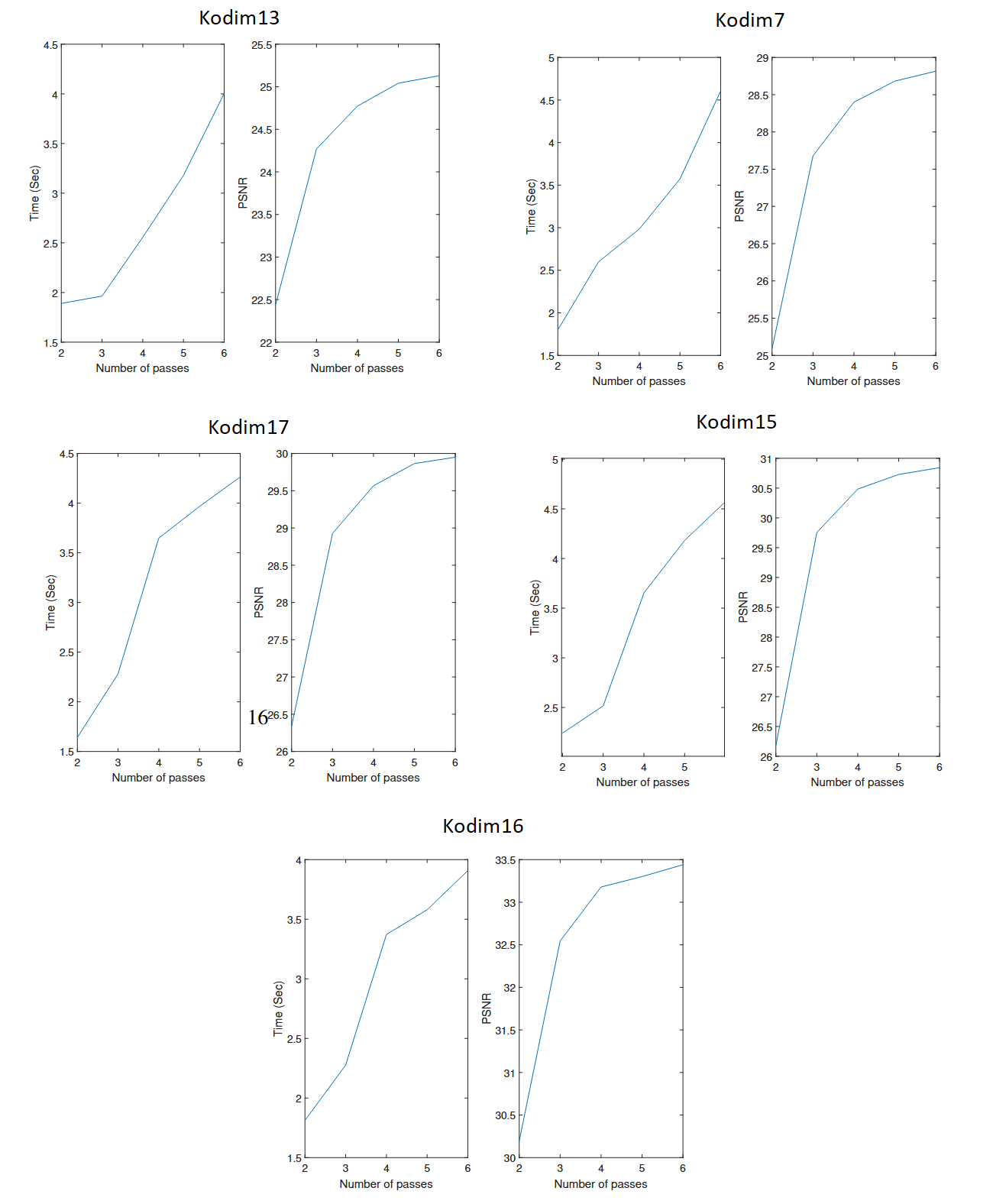}
\caption{\small{The time and PSNR comparisons for different benchmark images using different passes.}}\label{Res_2}
\end{center}
\end{figure}

\begin{table}
\begin{center}
\caption{Comparing the running time and the PSNR achieved by the proposed algorithms and baselines for Example \ref{exa_1}. The results are for the {matrix rank} $R=30$. For Algorithms 1 and 3, four passes were used ($q=1$) while for Algorithms 2 and 4, three passes were used ($v=3$). }\label{Table_Salman}
\vspace{0.2cm}
{\smaller
\begin{tabular}{||c| c | c ||} 
 \multicolumn{3}{c}{Kodim13 }\\
 \hline
Algorithms  & Running Time (Seconds) & PSNR \\
 \hline\hline
 Algorithm 1& 2.6&  24.5\\ 
  Algorithm 2& {\bf 1.9}& 23.6\\ 
 Algorithm 3& 3.5&   {\bf 24.6}\\
 Algorithm 4& 2.3&   24.5\\
 \hline
 \multicolumn{3}{c}{Kodim7 }\\\hline
 Algorithms  & Running Time (Seconds) & PSNR \\
 \hline\hline
 Algorithm 1&  2.9&  28.2\\ 
  Algorithm 2&  {\bf 1.9}&  27.40\\ 
 Algorithm 3& 3.6&  {\bf 28.43}\\
 Algorithm 4& 3.1&  27.40\\
 \hline
  \multicolumn{3}{c}{Kodim17 }\\\hline
 Algorithms  & Running Time (Seconds) & PSNR \\
 \hline\hline
 Algorithm 1&  3.6&  {\bf 29.3}\\ 
  Algorithm 2&  {\bf 2.1}&  28.4\\ 
 Algorithm 3& 4.1&  29.3\\
 Algorithm 4& 3.3&  28.3\\
 \hline
   \multicolumn{3}{c}{Kodim15 }\\\hline
 Algorithms  & Running Time (Seconds) & PSNR \\
 \hline\hline
 Algorithm 1&  3.5&  30.7\\ 
  Algorithm 2&  {\bf 2.4}&  29.6\\ 
 Algorithm 3& 3.9&  {\bf 30.8}\\
 Algorithm 4& 3.6&  29.7\\
 \hline
  \multicolumn{3}{c}{Kodim16 Image}\\\hline
 Algorithms  & Running Time (Seconds) & PSNR \\
 \hline\hline
 Algorithm 1&  3.5&  {\bf 33.3}\\ 
  Algorithm 2&  {\bf 2.3}&  32.5\\ 
 Algorithm 3& 4.2&  33.3\\
 Algorithm 4& 3.2&  32.6\\
 \hline
\end{tabular}
}
\end{center}
\end{table}

\end{exa}

\begin{exa}\label{exa_2}
(Applications in image completion and image super-resolution) 
We now evaluate the proposed randomized pass-efficient algorithm on two additional tasks: image completion with random missing pixels, and image super-resolution (SR) with structured missing patterns, as introduced in Section~\ref{Sec:appl}.

\textit{Image Completion.}  
We apply the quaternion matrix completion method to images ``Kodim11'', ``Kodim13'', ``Kodim16'', and ``Kodim24'', all resized to $256 \times 256 \times 3$ for consistency. For each image, 70\% of the pixels were removed uniformly at random. The reconstruction was performed using the low-rank approximation with target rank $R=30$ and two passes of the data. To improve visual coherence, a Gaussian smoothing filter with parameter 0.6 was applied after each iteration. The reconstructed images are shown in Figure~\ref{img_recov}, demonstrating high perceptual quality and structural consistency despite the large proportion of missing data. {The proposed method (Algorithm \ref{ALg_2}), was used for all experimental evaluations in this example.}

\textit{Image Super-Resolution.}  
We next consider ``Kodim2'', ``Kodim15'', ``Kodim16'', and ``Kodim21'', which were downsampled by a factor of 4 and then upsampled back to their original resolution. This process simulates structured missing data by removing regular blocks of pixels: specifically, we retain a fixed block of columns/rows and omit the next, repeating this pattern throughout. The recovery was performed using the same completion algorithm with rank $R=20$, two passes, and Gaussian smoothing (parameter 0.6). Figure~\ref{sr_img} displays the results. The proposed method (Algorithm \ref{ALg_2}), was used for all experimental evaluations in this example.

These experiments confirm that the proposed pass-efficient randomized algorithm is effective in both random and structured data completion scenarios, making it suitable for real-world image restoration and super-resolution applications.

\begin{figure}
\begin{center}
\includegraphics[width=0.9\linewidth]{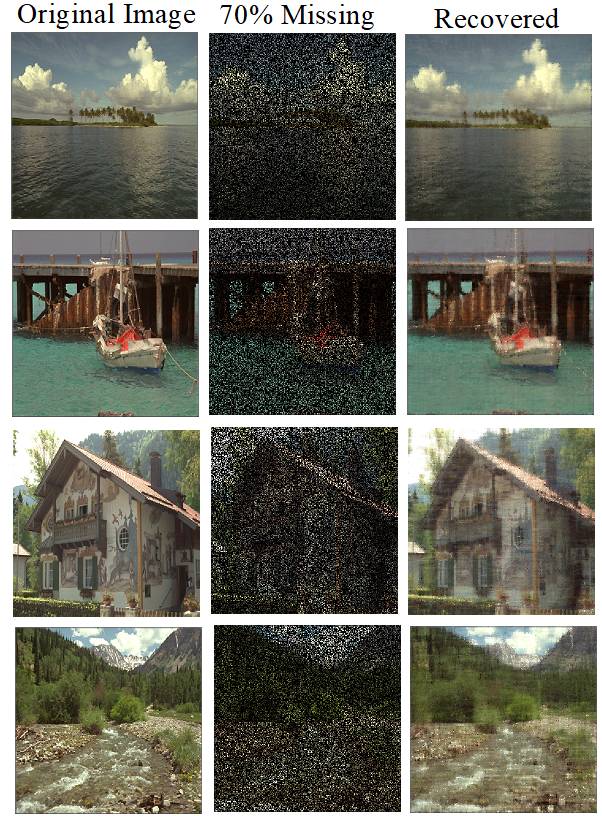}
\caption{\small{Recovered images with 70\% missing pixels. Original image (left), image with missing pixels (middle image) and recovered image (right image).}}\label{img_recov}
\end{center}
\end{figure}

\begin{figure}
\begin{center}
\includegraphics[width=0.9\linewidth]{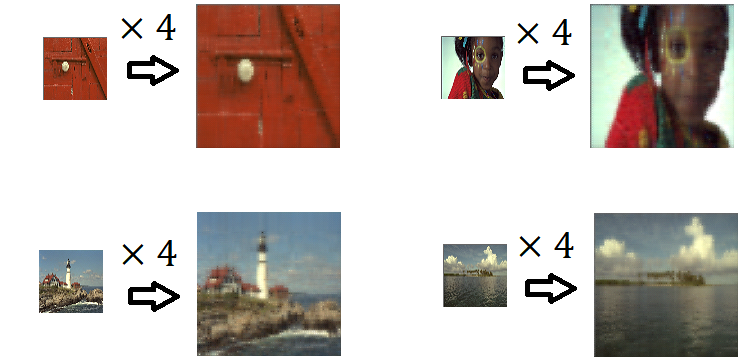}
\caption{\small{Super resolution results for four images .}}\label{sr_img}
\end{center}
\end{figure}

\end{exa}

\begin{exa}\label{exa_3}
(Application in deep learning) 
In this experiment, we demonstrate how the proposed quaternion matrix completion method can enhance the robustness of deep neural networks (DNNs) on an instance image segmentation task—a key problem in computer vision.
Unlike semantic segmentation, which assigns class labels to pixels without distinguishing between individual objects, instance segmentation assigns unique identifiers to each object instance; see~\cite{minaee2021image} for a detailed overview. We use the YOLOv8 Instance Segmentation model~\cite{ultralytics_yolov8} for evaluation.

Figure~\ref{ins_seg_1} illustrates the experimental setup. The top row shows the original image (leftmost) and three degraded variants with subtle perturbations. The bottom row presents the corresponding instance segmentation outputs. As seen, the YOLOv8 model performs well on the original image but exhibits sensitivity to even mild corruptions. For example, in the second image, the network misclassifies a handbag as part of the dog; in the third and fourth images, the model fails to detect the dog entirely due to minor distortions near the eye or body.

To mitigate this degradation, we apply the proposed randomized quaternion completion algorithm (Section~\ref{Sec:appl}) to recover the corrupted images before passing them to the segmentation model. Please note that the proposed method (Algorithm \ref{ALg_2}), was used for all experimental evaluations in this example. The results, shown alongside the original segmentation, indicate that our method restores key structural features, enabling the DNN to produce outputs closely matching those of the uncorrupted image. This demonstrates the potential of quaternion completion as a lightweight and effective defense mechanism against input perturbations in deep learning pipelines.

\begin{figure}
\begin{center}
\includegraphics[width=0.24\linewidth]{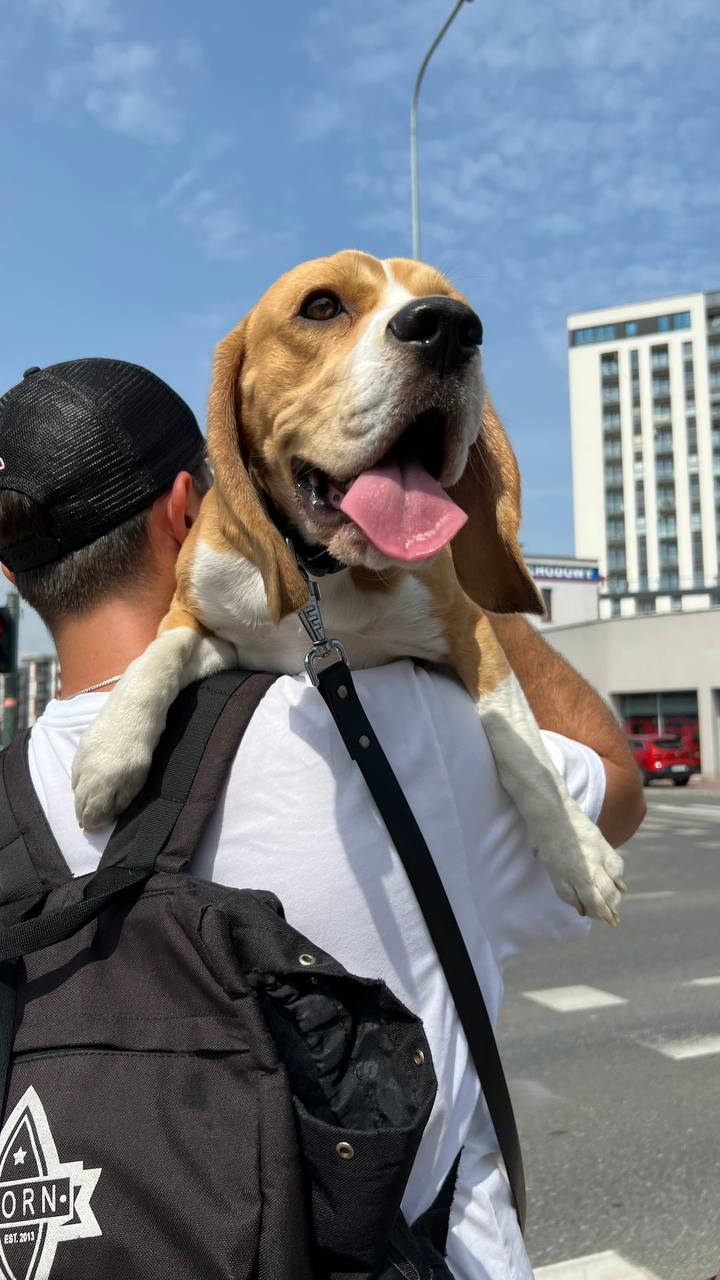}
\includegraphics[width=0.24\linewidth]{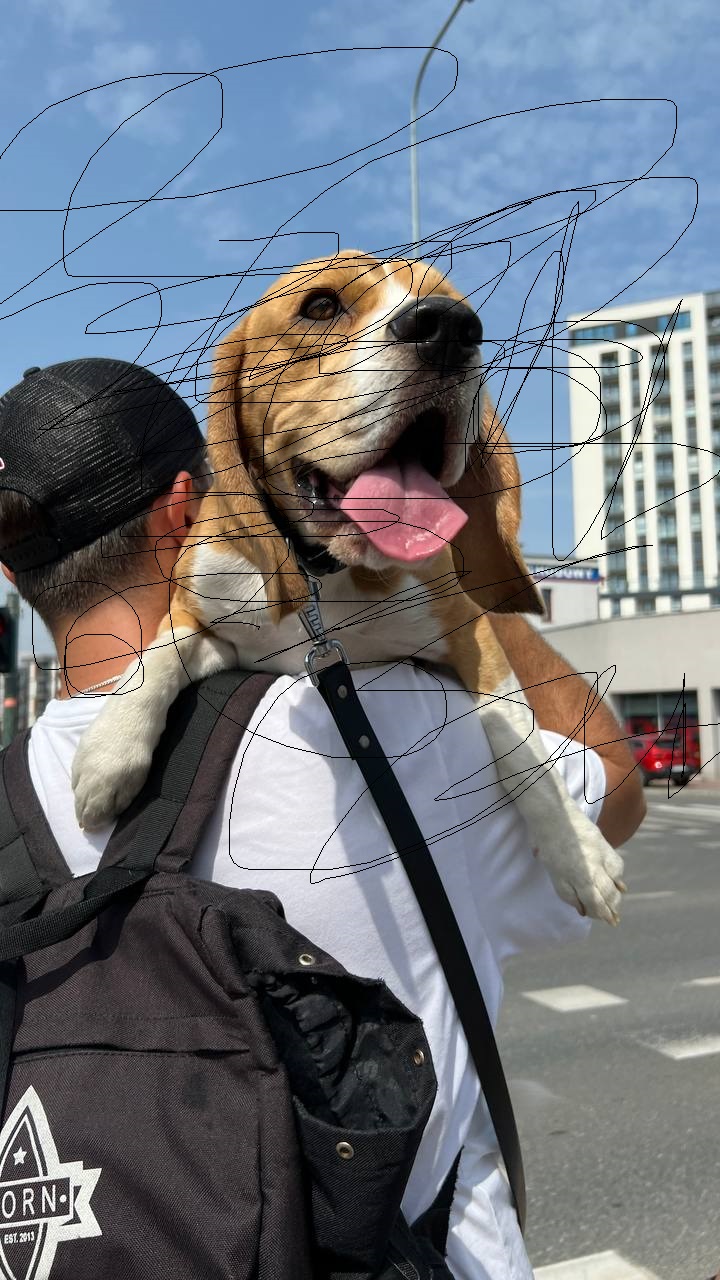}
\includegraphics[width=0.24\linewidth]{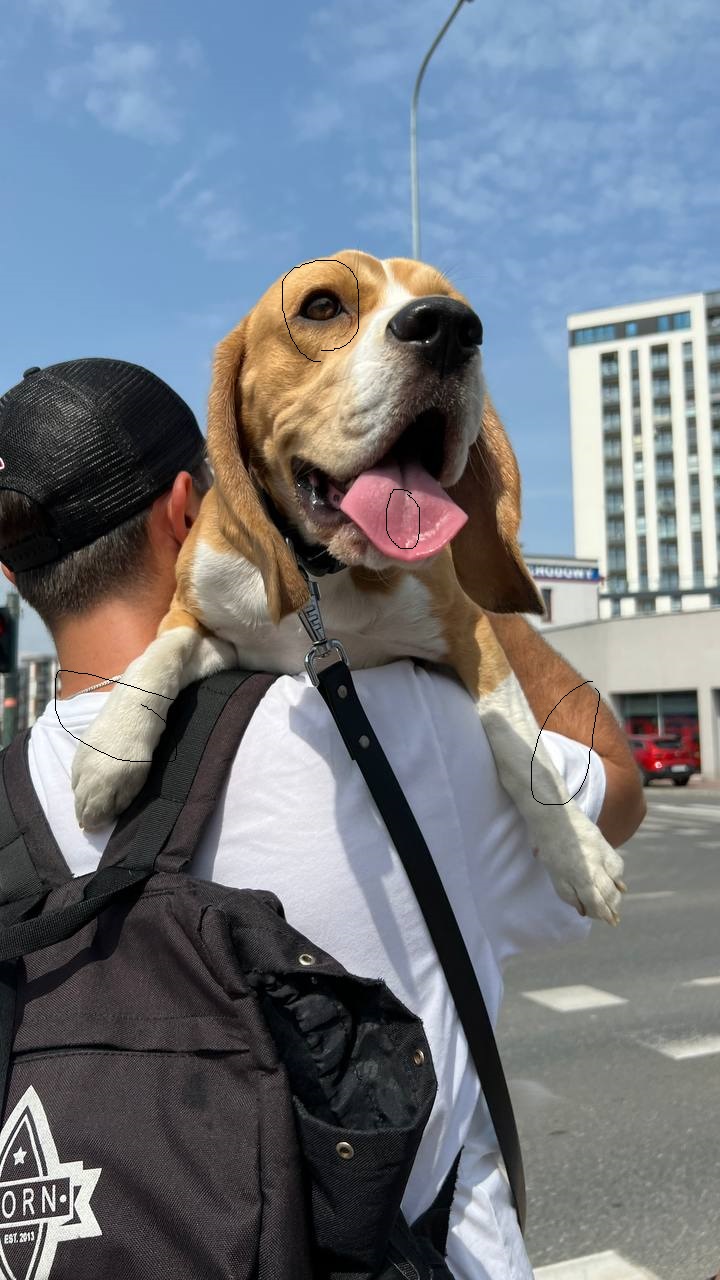}
\includegraphics[width=0.24\linewidth]{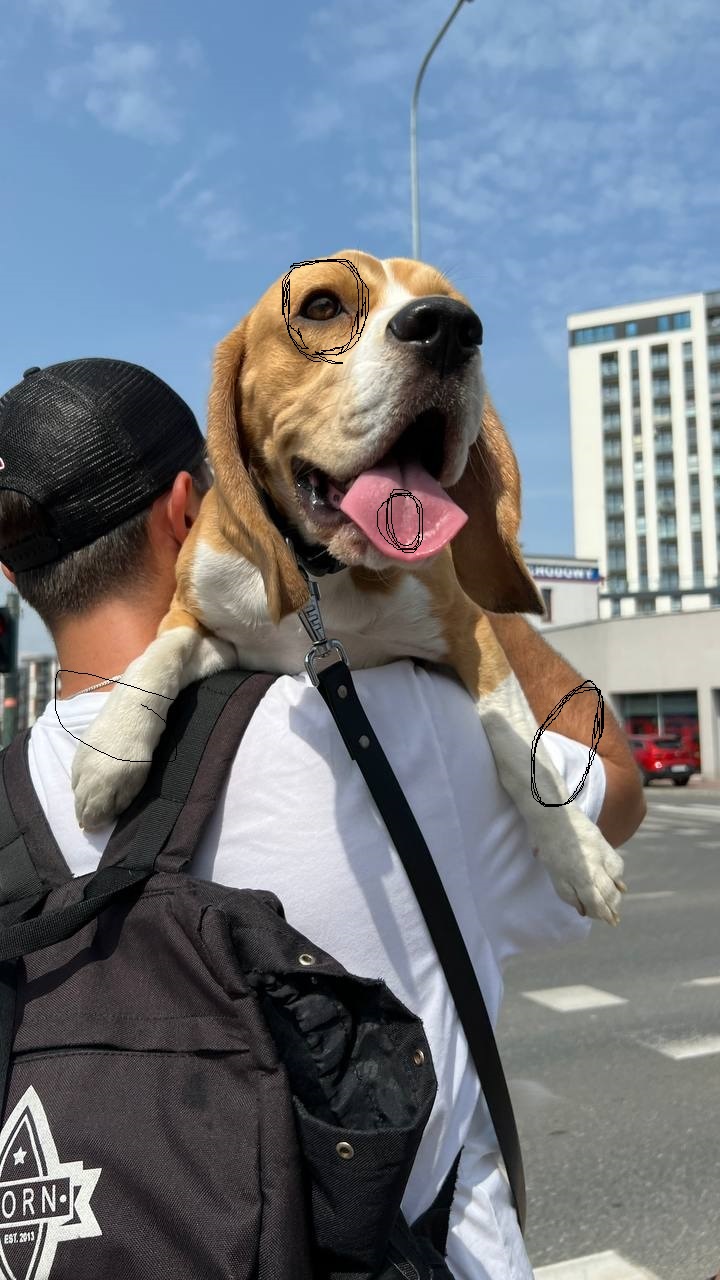}
\includegraphics[width=0.24\linewidth]{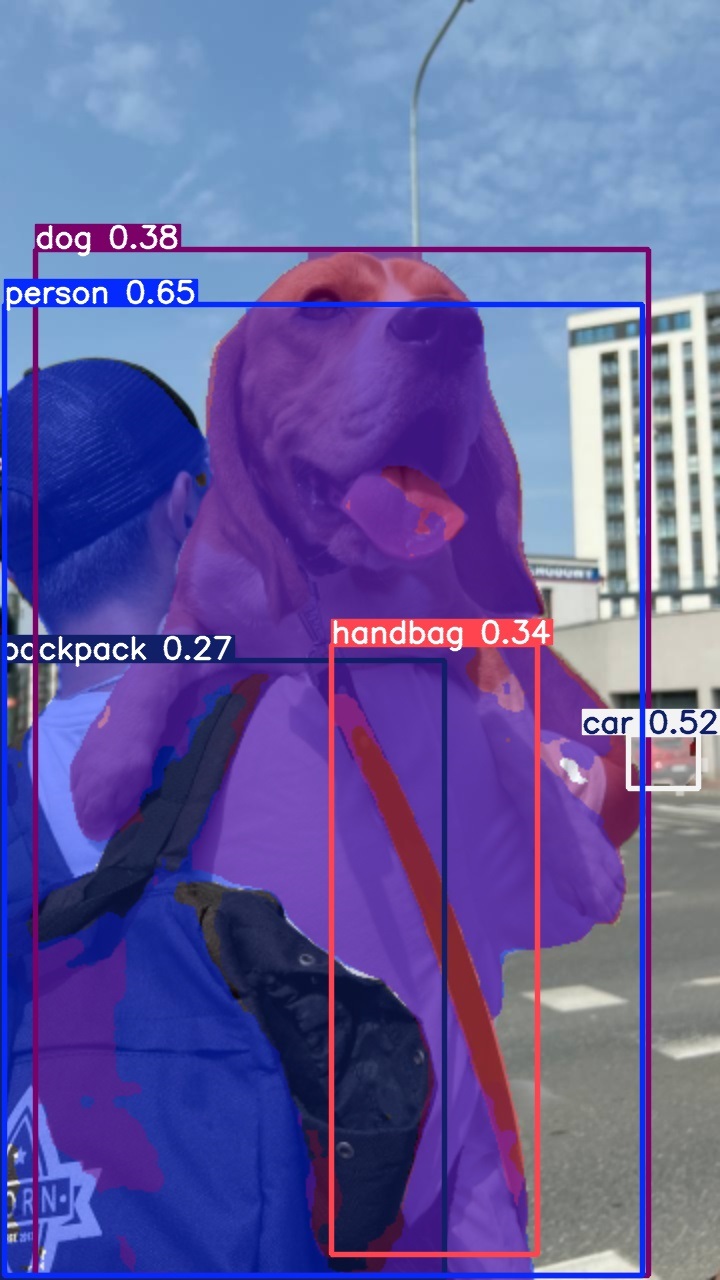}
\includegraphics[width=0.24\linewidth]{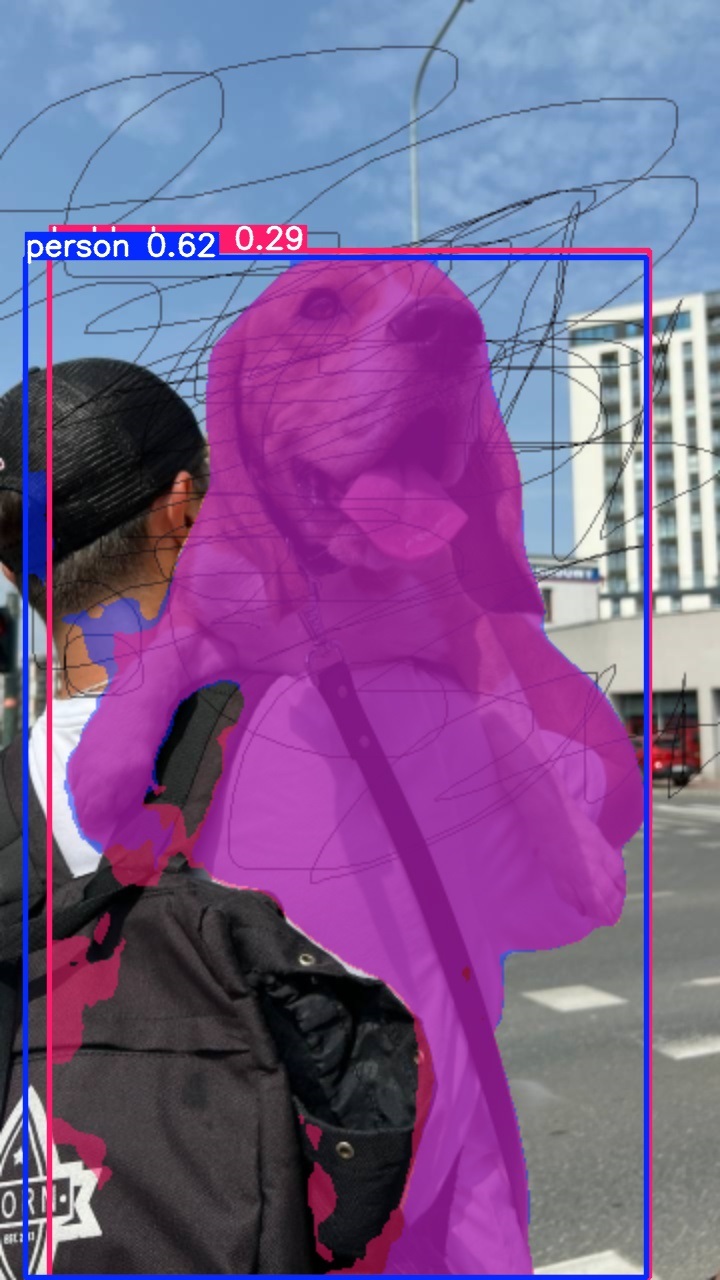}
\includegraphics[width=0.24\linewidth]{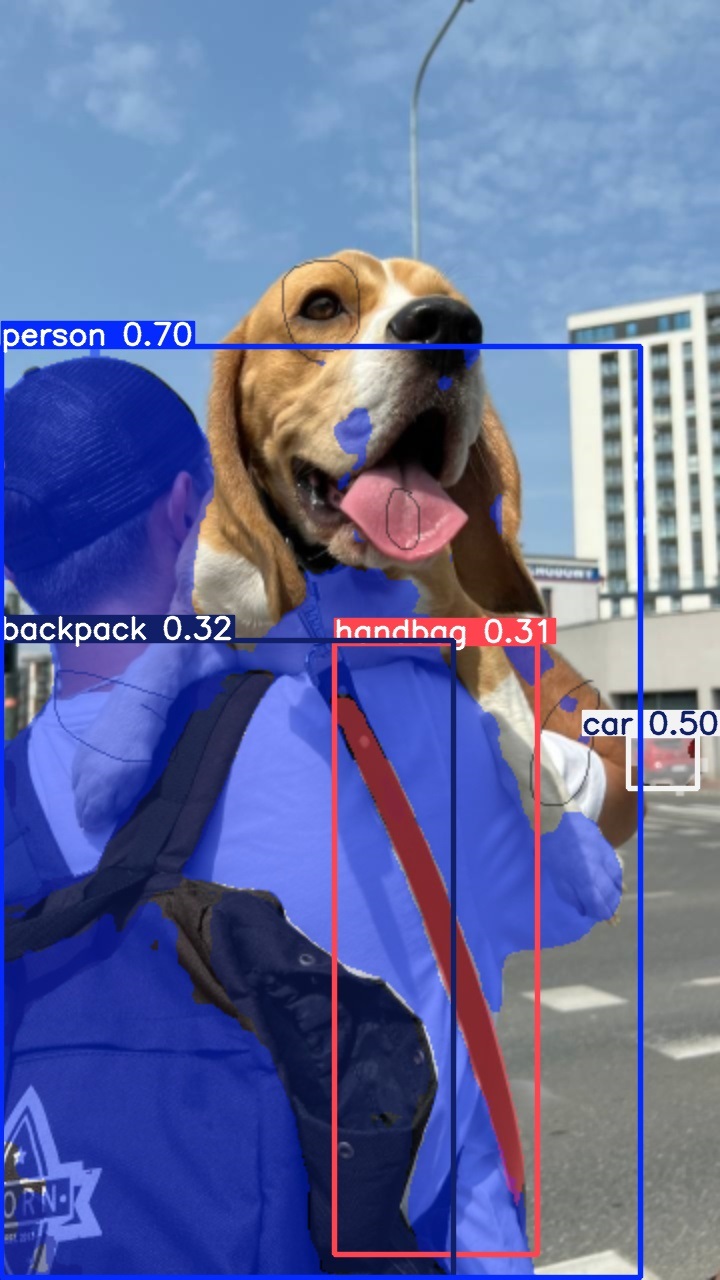}
\includegraphics[width=0.24\linewidth]{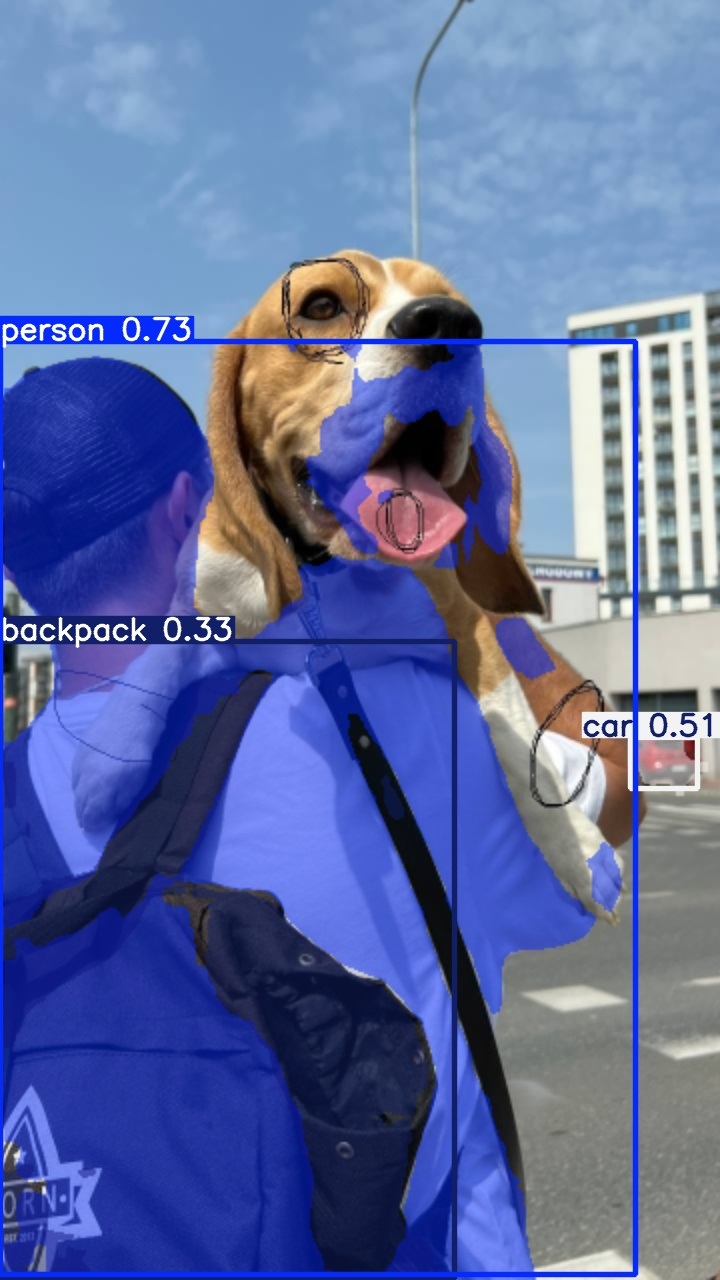}
\caption{\small{The upper left image is the original image, and the next three images are the degraded images. The images in the bottoms are the instance segmentation corresponding to the upper images.}}\label{ins_seg_1}
\end{center}
\end{figure}

\begin{figure}
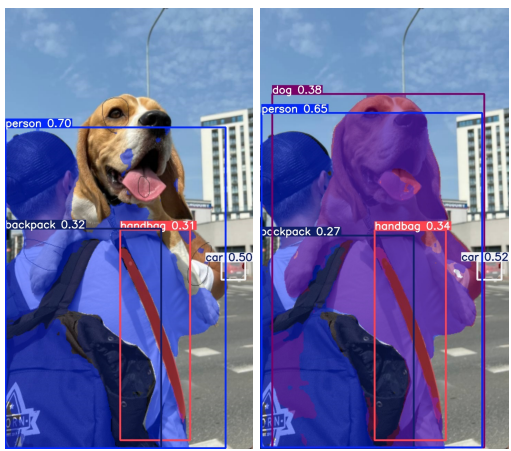

\begin{center}
\includegraphics[width=0.24\linewidth]{predict_2.jpg}
\includegraphics[width=0.24\linewidth]{predict_o.jpg}
\caption{\small{The instance segmentation of the images before (left image) and after (right image) the reconstruction process.}}\label{ins_seg_2}
\end{center}
\end{figure}

\end{exa}

{
\begin{exa}{(Random quaternion matrices)}\label{exa_rq}
This example provides an example of applying the proposed randomized algorithm to random low-rank quaternion matrices of larger sizes compared to the previous examples. 
We generate a random low-rank quaternion matrix ${\bf A} \in \mathbb{Q}^{m \times n}$ 
with a specified rank $r$ (where $r \ll \min(m,n)$) as
\[
{\bf A} = {\bf B}\,{\bf C},
\]
where ${\bf B}\in\mathbb{Q}^{m\times r}$ and ${\bf C}\in\mathbb{Q}^{r\times n}$ are 
Gaussian random quaternion matrices.
In our experiments we used $m=n=500,1000,1500,\ldots,4000$ and the rank $R=50$. Please note that a quaternion matrix of size $4000 \times 4000$ requires approximately 0.47 GB of memory for storage. Figure \ref{rand_ex} reports the computing time and relative error for the deterministic SVD, the classical randomized algorithm (Algorithm \ref{ALg_1}) with $q=1,2$, and the proposed pass-efficient randomized algorithm (Algorithm \ref{ALg_2}) using two and three passes. For clarity, we omit the results from Algorithm \ref{ALg_block_flix}, as they were nearly identical to those of Algorithm \ref{ALg_2} both relative error and computing time. The experiments confirm that the proposed algorithms achieve satisfactory accuracy with significantly faster computation times than the baseline methods.
\begin{figure}
\begin{center}
\includegraphics[width=0.49\linewidth]{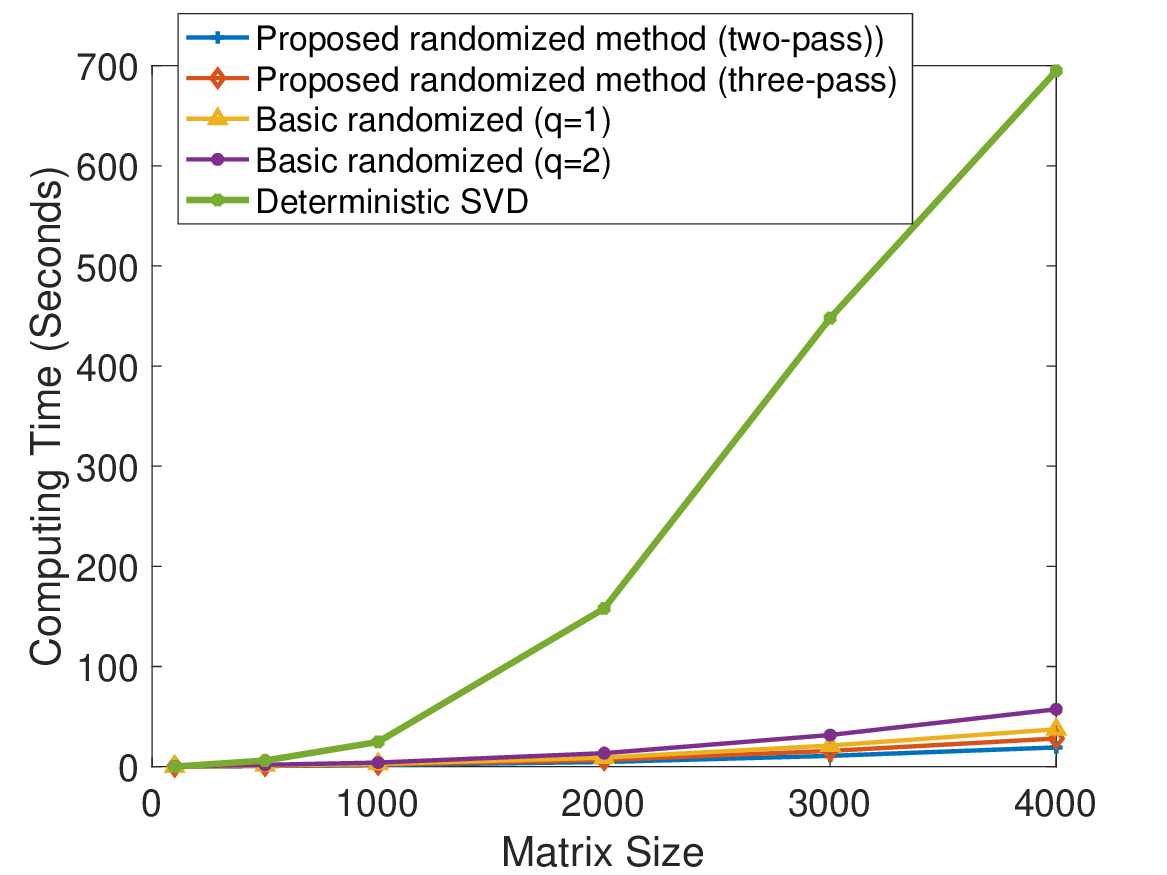}
\includegraphics[width=0.49\linewidth]{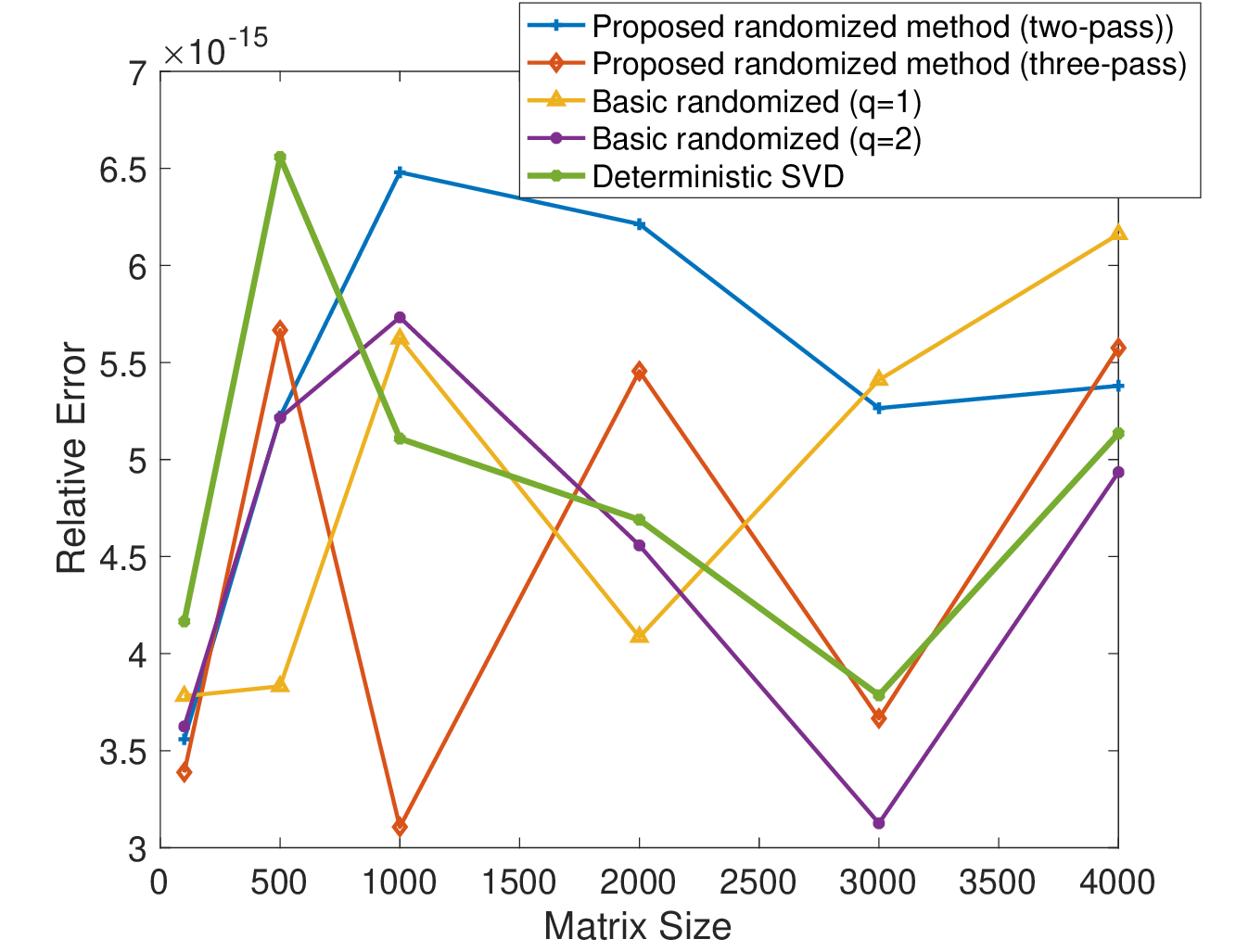}
\caption{\small{ The simulation results for random low-rank quaternion matrices.}}\label{rand_ex}
\end{center}
\end{figure}
\end{exa}}

\begin{exa}
{(Application to scientific data)} 
In this example, we compress scientific data generated from a simulation of a Lorenz-type chaotic system \cite{jia2021structure,chang2024randomized}. The Lorenz attractor, a three-dimensional nonlinear system originally developed to model atmospheric turbulence \cite{strogatz2024nonlinear}, is described by the following system of coupled differential equations:
\begin{align*}
    \frac{dx}{dt} &= \sigma (y - x), \\
    \frac{dy}{dt} &= x (\rho - z) - y, \\
    \frac{dz}{dt} &= x y - \beta z,
\end{align*}
where $\sigma,\beta,\rho>0.$ To achieve chaotic behavior in the Lorenz attractor, the parameters were set to $\sigma = 10$, $\beta = 8/3$, and $\rho = 28$ (the same parameters as  used in \cite{jia2021structure}). The system of coupled differential equations (5.6) was solved numerically using MATLAB's \texttt{ODE45} solver with the syntax \texttt{ODE45($f(t, [x, y, z])$, $[0, T]$, $[1, 1, 1]$)}, where $T > 0$ represents the simulation time.
In this example, the output vector $\mathbf{y}(t)$ is defined by the state of the Lorenz attractor,
\[
\mathbf{y}(t) = y_r(t)\mathbf{i} + y_g(t)\mathbf{j} + y_b(t)\mathbf{k},
\]
where $y_r(t)$, $y_g(t)$, and $y_b(t)$ are its component solutions. The input vector $\mathbf{x}(t)$ is a time-delayed and noisy version of the state,
\[
\mathbf{x}(t) = y_r(t - 1)\mathbf{i} + y_g(t - 1)\mathbf{j} + y_b(t - 1)\mathbf{k} + \mathbf{n}(t),
\]
with $\mathbf{n}(t)$ representing additive random noise. Following \cite{jia2021structure}, this process is modeled by a linear system featuring a quaternion coefficient matrix of size $N \times N$, where $N$ is a function of $T$. In our computations, we treat the dimensions generated by $T = 10, 20, 30, 40$ (yielding $N = 425, 957, 1513, 2105$, respectively) as scientific data.

The singular values for the cases $425\times 425$ and $957\times 957$ are shown in Figure \ref{sd_ex}. Given the relatively fast decay of singular values, the coefficient matrix exhibits a low-rank structure. To leverage this with a large-scale matrix, we set \(T=40\), generating a quaternion matrix of size \(2105 \times 2105\). The performance of our proposed randomized algorithm was evaluated by computing low-rank approximations at ranks \(R = 50, 100, 200, 300\). Figure~\ref{sd_ex_r} compares the resulting computational time and relative error against established baselines.
 The experimental results consistently demonstrate a substantial reduction in computational time for our methods while maintaining solution quality comparable to existing approaches.
\begin{figure}
\begin{center}
\includegraphics[width=0.49\linewidth]{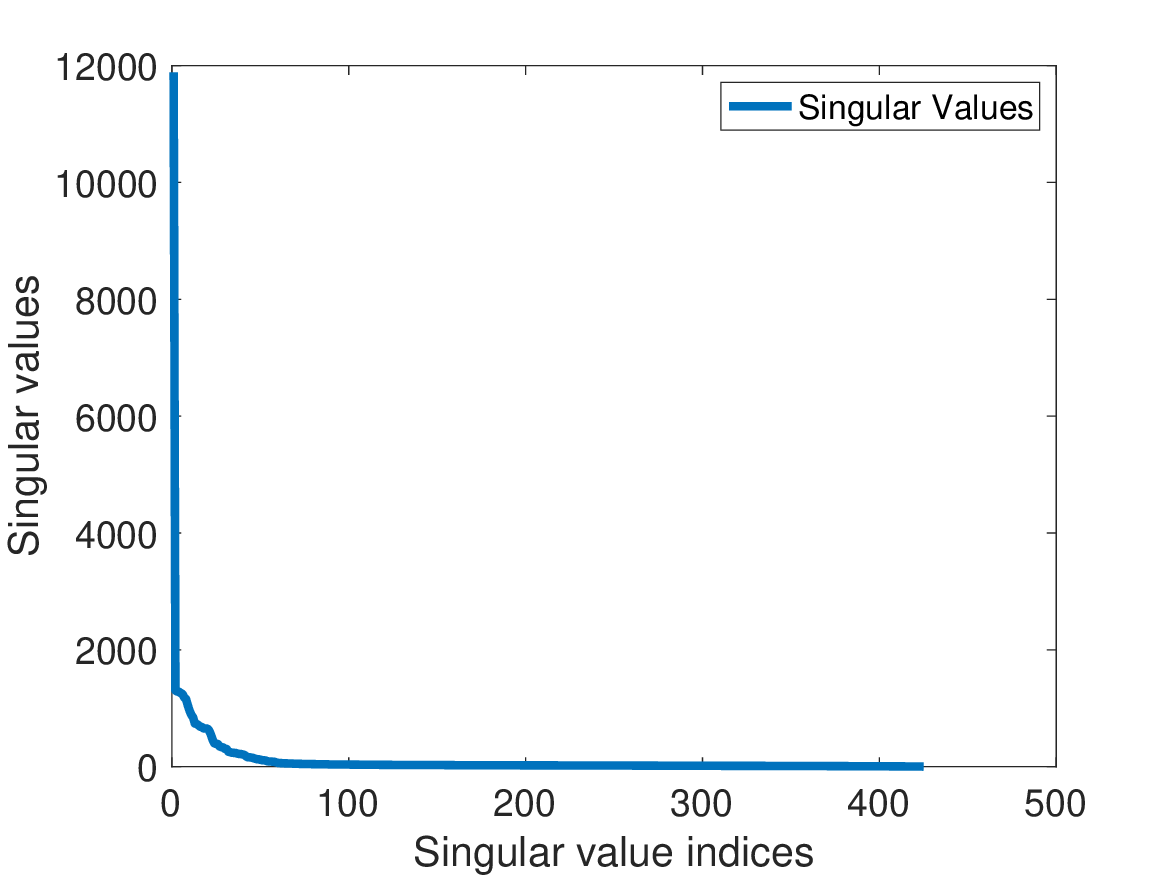}
\includegraphics[width=0.49\linewidth]{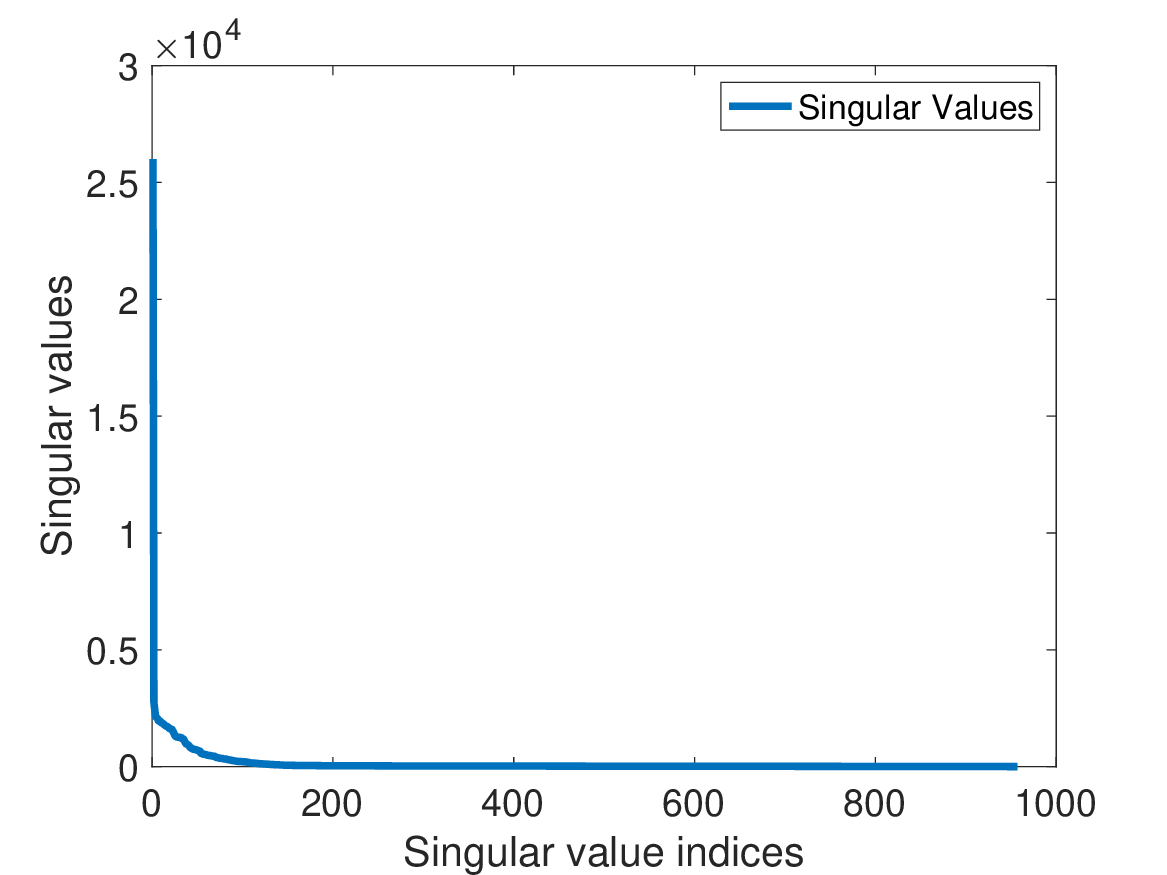}
\caption{\small{Singular values of the scientific data in Lorenz-type chaotic system (Left: a quaternion of size $425\times 425$) and (Right: a quaternion of size $957\times 957$).}}\label{sd_ex}
\end{center}
\end{figure}
\begin{figure}
\begin{center}
\includegraphics[width=0.49\linewidth]{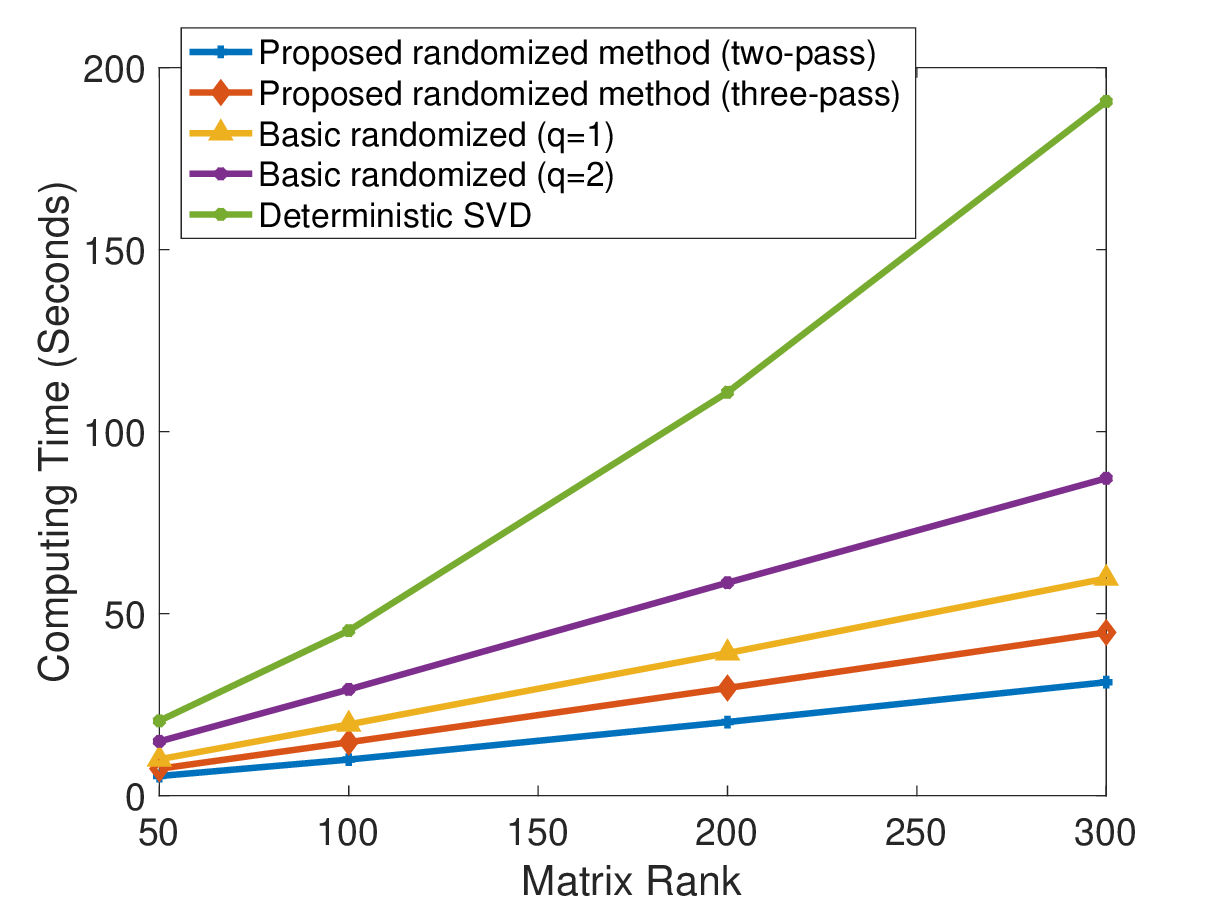}
\includegraphics[width=0.49\linewidth]{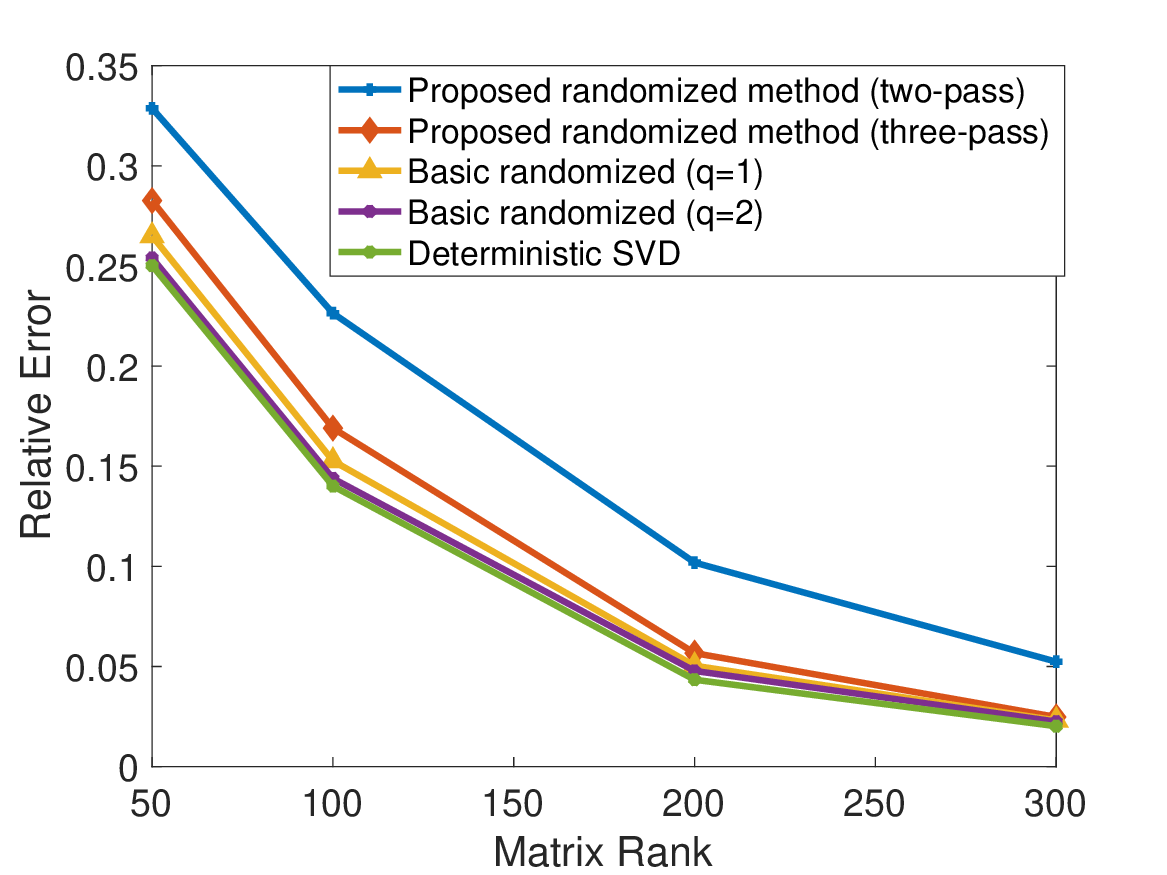}
\caption{\small{The simulation results for the scientific data matrix.}}\label{sd_ex_r}
\end{center}
\end{figure}
\end{exa}

\color{black}

\section{Conclusion and future work}\label{sec:conclu}

In this paper, we introduced a family of pass-efficient randomized algorithms for low-rank approximation of quaternion matrices. Our main contribution lies in enabling users to flexibly control the number of matrix passes---a key constraint in modern large-scale computations---while maintaining high approximation quality. The proposed algorithms include both classical randomized subspace iteration and its extension to block Krylov methods, all tailored for the quaternion setting. We provided theoretical analysis establishing spectral norm error bounds, showing that the expected approximation error decays exponentially with the number of passes. These theoretical results were substantiated by a suite of numerical experiments, which demonstrated the practical efficiency of our methods across various applications, including image compression, matrix completion, super-resolution, and robust deep learning pipelines. Beyond validating our algorithms in realistic settings, our work opens up several promising directions for future research:

\begin{itemize}
    \item \textbf{Generalization to broader algebraic structures:} We plan to extend our randomized framework to matrices over split-quaternions~\cite{ablamowicz2020moore} and Clifford algebras~\cite{cao2022moore}, which naturally arise in signal processing and geometric computing.

    \item \textbf{Structure-preserving implementations:} While all implementations in this work were carried out using quaternion arithmetic directly, we are actively investigating structure-preserving formulations (e.g., real block matrix representations) to further reduce computation time and memory usage.

    \item \textbf{Acceleration of higher-order decompositions:} Our methods can serve as efficient subroutines for computing the recently introduced Quaternion Tensor SVD (QTSVD)~\cite{pan2024block}. Since QTSVD relies on repeated QSVD computations, integrating our pass-efficient strategies promises to significantly improve its efficiency.

    \item \textbf{ Proper Orthogonal Decomposition (POD):} Investigating the application of our method to model order reduction, specifically using Proper Orthogonal Decomposition \cite{chatterjee2000introduction}.
\end{itemize}

We believe that the proposed algorithms offer a solid foundation for efficient low-rank modeling in quaternionic and hypercomplex domains.

\section*{Acknowledgment} The work was supported by the Ministry of Economic Development of the Russian Federation under Agreement No. 139-10-2025-034 dd. 19.06.2025, IGK 000000C313925P4D0002.

 \bibliographystyle{elsarticle-num} 
 \bibliography{cas-refs}





\end{document}